\providecommand{\U}[1]{\protect\rule{.1in}{.1in}}
\newcolumntype{Y}{>{\raggedleft\arraybackslash}X}
\def\bc{{\mathbb{C}}}
\def\bn{{\mathbb{N}}}
\def\br{{\mathbb{R}}}
\def\bz{{\mathbb{Z}}}
\def\br{\mathbb R}
\def\wt{\widetilde}
\def\vs{\vskip.3cm}
\def\noi{\noindent}
\def\wt{\widetilde}
\def\gdeg{G\text{\rm -deg}}
\def\caldeg{\mathcal G\text{\rm -deg}}
\def\Om{\Omega}
  \lstdefinelanguage{GAP}{
    basicstyle=\ttfamily,
    keywords={true, false, function, return, fail, if, in, while, do, od, else, elif, fi, break, continue},
    keywordstyle=\color{blue}\bfseries,
    otherkeywords={
      >, <, ==
    },
    identifierstyle=\color{black},
    sensitive=True,
    comment=[l]{\#},
    commentstyle=\color{cyan},
    stringstyle=\color{red},
    morestring=[b]',
    morestring=[b]"
  }
\def\ve{\varepsilon}
\DeclareMathOperator{\id}{Id}
\newcommand\cW{\ensuremath{\mathcal W}}
\newcommand\bbR{\ensuremath{\mathbb R}}
\DeclareMathAlphabet{\mathscrbf}{OMS}{mdugm}{b}{n}
\newtheorem{theorem}{Theorem}[section]
\newtheorem{proposition}[theorem]{Proposition}
\newtheorem{lemma}[theorem]{Lemma}
\newtheorem{definition}[theorem]{Definition}
\newtheorem{remark}[theorem]{Remark}
\newtheorem{remark-definition}[theorem]{Remark and Definition}
\begin{document}

\title[Unbounded Branches of Non-Radial Solutions]{Unbounded Branches of Non-Radial Solutions to Semilinear Elliptic Systems on a Disc and their Patterns}

\author[Z. Ghanem --- C. Crane]{Ziad Ghanem --- Casey Crane}

\address
{\textsc{Ziad Ghanem}\\
Department of Mathematical Sciences\\
University of Texas at Dallas\\
Richardson, TX 75080, USA}

\email{ziad.ghanem@utallas.edu}

\address
{\textsc{Casey Crane}\\
Department of Mathematical Sciences\\
University of Texas at Dallas\\
Richardson, TX 75080, USA.}
\email{casey.crane@utallas.edu}

\subjclass[2010] {Primary: 35B06, 47H11, 35J91, 35B32}

\keywords{Dirichlet Laplacian; non-radial solutions; equivariant
Brouwer degree}

\begin{abstract}
In this paper, we leverage 
the $O(2) \times \mathbb Z$-equivariant Leray-Schauder degree and a novel characterization of the Burnside Ring $A(O(2) \times \bz_2)$ presented by Ghanem in \cite{Ghanem1} to obtain $(\rm i)$ an existence result for non-radial solutions to the problem $-\Delta u = f(z,u) + Au$, $u|_{\partial D} = 0$ and $(\rm ii)$ local and global bifurcation results for multiple branches of non-radial solutions to the one-parameter family of equations $-\Delta u = f(z,u) + \textbf{A}(\alpha)u$, $u|_{\partial D} = 0$, where $D$ is the planar unit disc, $u(z) \in \mathbb R^N$, $A : \br^N \rightarrow \mathbb R^N$ is an $N \times N$ matrix, $\textbf{A}: \br \rightarrow L(\br^N)$ is a continuous family of $N \times N$ matrices and $f: \overline D \times \mathbb R^N \rightarrow \mathbb R^N$ is a sublinear, $O(2) \times \mathbb Z_2$-equivariant function of order $o(|u|)$ as $u$ approaches the origin in $\mathbb R^N$.
\end{abstract}

\maketitle


\section{Introduction} 
Although a proven tool for studying the existence of solutions to a wide variety of differential equations (cf. \cite{Amster, Bebernes, Gaines, Hartman, Knobloch, Mahwin3, Mahwin5}), the inability of the Leray-Schauder degree to distinguish $(\rm i)$ between solutions that differ by a fixed angular shift and $(\rm ii)$  radial solutions from non-radial solutions has hindered its suitability for differential equations defined on radially symmetric domains. For example, let $D:=\{z\in \bc: |z|<1\}$ be the planar unit disc and consider the following system of elliptic equations
\begin{equation}\label{eq:Lap}
\begin{cases}
-\triangle u= f(z,u) + Au, \quad u(z)\in V, \\
u|_{\partial D}=0, 
\end{cases}
\end{equation}
where $V:= \br^N$, $A:V \rightarrow V$ is an $N \times N$ matrix and $f:\overline D\times V\to V$ is a continuous function satisfying the conditions:
  \begin{enumerate}[label=($A_\arabic*$)]
\item\label{c1} $f(z,u)$ is $o(|u|)$ as $u$ approaches $0$ for all $z \in \overline{D}$, i.e.
\[
    \lim_{u \to 0} \frac{f(z,u)}{|u|} = 0\quad z\in \overline{D};
\]
\item\label{c2} there exist $a$, $b>0$ and $\nu\in (0,1)$ such that
\begin{align*}
		 |f(z,u)|<a|u|^\nu +b \quad \text{for all} \quad
		 {z\in \overline{D}}, {u\in V}.
\end{align*}
\item\label{c3} $f$ is odd, i.e.
\[
f(z,-u) = -f(z,u) \quad \text{for all} \quad
		 {z\in \overline{D}}, {u\in V}.
\]
\item\label{c4} 
$f(e^{i\theta}z,u)=f(z,u)$ for all $z\in D$, $u\in V$ and $\theta\in \br$.	
\end{enumerate}
Condition \ref{c1} 
ensures that the linearization of $f$ at the origin in $V$ does not interact with the spectrum of $A$, condition \ref{c2} is necessary to guarantee \textit{a priori} bounds on the solutions to \eqref{eq:Lap}
and the conditions \ref{c3}--\ref{c4} imply that the system \eqref{eq:Lap} admits the symmetries of the product group
\[
G := O(2) \times \bz_2.
\]
Whereas the only solutions to \eqref{eq:Lap} ascertainable by the classical Leray-Schauder degree have no dependence on the angular variable $\theta \in [0,2\pi]$, the effectiveness of the $G$-equivariant Leray-Schauder degree in detecting the existence of solutions not fixed by the action of $O(2)$ to \eqref{eq:Lap}  problems similar to \eqref{eq:Lap} has already been well established (cf. \cite{BalHooton, BalChen, book-new, Balkraw, AED, Dab, DuanCrane, Duan, Eze, Carlos, Carloskraw, Ghanem1, Krawcewicz}).
Indeed, Balanov et. al obtain in \cite{BalHooton} a sufficient condition for the existence of non-radial solutions to a boundary value problem of the form \eqref{eq:Lap} involving the 
parity of the number of eigenvalues of the matrix $A$, accounting for geometric multiplicity, which dominate the positive zeros of each of the Bessel functions of the first kind. Relying on a novel characterization of the interactions between orbit types in the Burnside Ring $A(G)$ (see Appendix \ref{sec:appendix} for a definition of the Burnside Ring) described by Ghanem in \cite{Ghanem1}, we are able to contribute an additional statement specifying the minimal radial symmetries of these solutions up to a class of maximal subgroup conjugacy classes in the isotropy lattice $\Phi_1(G)$ (see Remark \ref{rm:amalgamated_notation} for the definition of the amalgamated subgroups $D_{2m} {}^{D_m}\times^{\bz_1} \bz_2$ with $m \in \bn$). The reader may compare the following result with Theorem $3.1$ in \cite{BalHooton}:
\begin{theorem}
Let $m > 0$ be a positive Fourier mode and assume that the spectrum of $A$ does not contain the squares of the positive zeros of any of the Bessel functions of the first kind. If the matrix $A: V \to V$ has an {\bf odd number} of eigenvalues with {\bf odd geometric multiplicity} that are larger than an {\bf odd number} of the positive zeros of the $m$-th Bessel function of the first kind, then equation \eqref{eq:Lap} admits a non-trivial solution $u \in \mathscr{H} \setminus \{0\}$ with isotropy subgroup satisfying $(G_u) \geq (D_{2m} {}^{D_m}\times^{\bz_1} \bz_2)$.
\end{theorem}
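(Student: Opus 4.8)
The plan is to cast \eqref{eq:Lap} as a fixed-point problem for a compact $G$-equivariant field on the Sobolev space $\mathscr H := H^1_0(D;V)$, and then to detect the bifurcation of non-radial solutions from the trivial branch by computing the relevant $G$-equivariant Leray--Schauder degree using the Burnside ring characterization of \cite{Ghanem1}. First I would rewrite the equation as $u = (-\Delta)^{-1}\big(f(z,u) + Au\big) =: \mathscr F(u)$, noting that $(-\Delta)^{-1}$ is a compact operator on $\mathscr H$ and that conditions \ref{c3}--\ref{c4} make $\mathscr F$ equivariant with respect to the natural isometric $G$-action on $\mathscr H$ (the $O(2)$-action by rotations/reflections of the disc, the $\bz_2$-action by $u \mapsto -u$). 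Condition \ref{c2} supplies the \textit{a priori} bound that confines all solutions to a ball $B_R \subset \mathscr H$, so that the Leray--Schauder degree $G\text{-}\deg(\id - \mathscr F, B_R)$ is well defined; condition \ref{c1} guarantees that $0$ is an isolated solution with $\mathscr F'(0) = (-\Delta)^{-1}A$, so the degree also localizes near the origin.

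Next I would diagonalize the linearization. The Dirichlet Laplacian on $D$ has eigenfunctions given by $J_\ell(s_{\ell,k} r)e^{i\ell\theta}$, where $J_\ell$ is the $\ell$-th Bessel function of the first kind and $s_{\ell,k}$ its $k$-th positive zero; the eigenspace for the Fourier mode $\ell = m$ carries, after tensoring with the eigenspaces of $A$, an irreducible $G$-representation whose isotropy is exactly the amalgamated subgroup $D_{2m}{}^{D_m}\!\times^{\bz_1}\bz_2$ (for the $(-1)$-eigenvectors of $A$) by the classification recalled in Remark \ref{rm:amalgamated_notation}. The hypothesis that $\operatorname{spec}(A)$ avoids all $s_{\ell,k}^2$ ensures $\id - \mathscr F'(0)$ is invertible, so $0$ is nondegenerate and the local degree is $\prod_{\mu} (\deg_\mu)^{\,m_\mu}$ over the negative real spectrum of $\id - \mathscr F'(0)$, computed in the $G$-representation structure.

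The computational heart is then a splitting-lemma / product-formula argument in the Burnside ring $A(G)$: the local $G$-degree at the origin is a product, over the finitely many eigenvalues $\mu$ of $A$ exceeding some $s_{m,k}^2$, of basic degrees $\deg_{V_{m,\mu}}$ attached to the mode-$m$ isotypic components, each raised to the geometric multiplicity of $\mu$. Using the characterization of \cite{Ghanem1} of how the generators indexed by the lattice $\Phi_1(G)$ multiply — in particular which products have a nonzero coefficient on the maximal orbit type $(D_{2m}{}^{D_m}\!\times^{\bz_1}\bz_2)$ — one sees that the coefficient of this orbit type in the total product is a sum of $\pm 1$'s whose parity equals the parity of (number of such $\mu$) $\times$ (their total geometric multiplicity). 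The triple-odd hypothesis (an \textbf{odd} number of eigenvalues of \textbf{odd} geometric multiplicity larger than an \textbf{odd} number of positive zeros $s_{m,k}$) forces this coefficient to be odd, hence nonzero. I would then compare with the degree over the large ball $B_R$: a standard homotopy using condition \ref{c2} (pushing $f$ off to a linear-growth regime, or a homotopy to an equivariant map with only the trivial zero) shows $G\text{-}\deg(\id - \mathscr F, B_R)$ has vanishing coefficient on that orbit type, so by additivity of the degree there must be a solution in $B_R \setminus \{0\}$ carrying an isotropy containing $D_{2m}{}^{D_m}\!\times^{\bz_1}\bz_2$.

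The main obstacle I anticipate is the bookkeeping in the Burnside-ring product: one must correctly identify the basic degree contributed by each mode-$m$ eigenspace of $A$ (this is where the precise $G$-representation structure, and the sign conventions for $\eqdeg{G}$, matter), and then extract the coefficient on the amalgamated orbit type from a product of possibly many such generators, using exactly the multiplication rules of \cite{Ghanem1} rather than re-deriving them. A secondary subtlety is ensuring the isotropy of the detected solution is $\geq (D_{2m}{}^{D_m}\!\times^{\bz_1}\bz_2)$ and not merely that its orbit type appears in the degree — this follows from the general fact that a nonzero coefficient on an orbit type $(H)$ in a Leray--Schauder degree forces a solution with isotropy $\supseteq$ some conjugate of $H$ — but one should state this cleanly and apply it to the \emph{maximal} such orbit type occurring with nonzero coefficient.
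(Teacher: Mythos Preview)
Your outline is essentially the paper's argument: recast \eqref{eq:Lap} as a $G$-equivariant completely continuous field, confine the nontrivial solutions to an annulus so that existence reduces (via Lemma~\ref{lemm:sufficient_condition}) to showing $\operatorname{coeff}^{H_m}\big(\gdeg(\mathscr A,B(\mathscr H))\big)\neq 0$, and then compute this coefficient via the product formula \eqref{eq:computational_formula_gdegA} and Proposition~\ref{prop:main_existence}, which gives $-[m\in S]+2(\cdots)$, odd precisely under the triple-odd hypothesis. Three small corrections worth making as you flesh this out: the paper works in $\mathscr H=H_0^2$ rather than $H_0^1$; the antipodal $\bz_2$-action is on all of $\mathscr H$ (your parenthetical about ``$(-1)$-eigenvectors of $A$'' is spurious, since $\bz_2$ acts by $u\mapsto -u$ globally and $A$ plays no role in the representation structure); and the parity of the $(H_m)$-coefficient is governed by $\mathfrak n^m$ in \eqref{def:set_Sigma_m}, not by ``(number of $\mu)\times(\text{total multiplicity})$'', which is not the right count.
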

Besides its efficacy in proving the existence of non-radial solutions to radially symmetric differential equations, the $G$-equivariant Leray-Schauder degree has also been used to study bifurcation problems with domain-inherent symmetries (cf. \cite{BalBurnett, book-new, AED, BalWu, Ghanem1, Ghanem}). For example, in  the article \cite{Ghanem}, Ghanem et al. use a $G$-equivariant Leray-Schauder degree based framework to determine the existence of branches of non-trivial solutions bifurcating from the trivial solution and establish sufficient conditions for such branches $(\rm i)$ to consist only of non-radial solutions and $(\rm ii)$ to be unbounded for the following parameterization of
system \eqref{eq:Lap}:
\begin{equation}\label{eq:Lap_parameterized}
\begin{cases}
-\triangle u= f(z,u) + \bm A(\alpha)u, \quad u(z)\in V, \\
u|_{\partial D}=0, 
\end{cases}
\end{equation}
where $\alpha \in \br$ is a bifurcation parameter, $\bm A: \br \rightarrow L(V)$ is a continuous family of $N \times N$ matrices and where $f$ and $V$ are as they were in \eqref{eq:Lap}. However, the results obtained in \cite{Ghanem} are only capable of detecting the emergence of at most a single unbounded branch of non-radial solutions at each critical point $(\alpha_0,0)$ (see Definition \ref{def:critical_point}). Depending again on the novel characterization of $A(G)$ found in \cite{Ghanem1}, we are able to here present a result describing the emergence of branches of non-trivial solutions across all possible Fourier modes. Compare the following local equivariant bifurcation result with Theorem $4.1$, Corollary $4.2$ and Proposition $5.1$ in \cite{Ghanem}:
\begin{theorem}
Let $m > 0$ be a positive Fourier mode and assume that $(\alpha_0, 0) \in \Lambda$ is an isolated critical point of \eqref{eq:Lap_parameterized} with a deleted regular neighborhood $\alpha_0^- < \alpha_0 < \alpha_0^+$. If the 
number of eigenvalues with odd geometric multiplicity that are larger than an odd number of the positive zeros of the $m$-th Bessel function of the first kind {\bf differ in parity} for the matrices $\bm A(\alpha_0^\pm): V \to V$, then \eqref{eq:Lap_parameterized} admits a branch of non-trivial solutions $\mathscr{C}$ with branching point $(\alpha_0, 0)$ satisfying $(G_u) \geq (D_{2m} {}^{D_m}\times^{\bz_1} \bz_2)$ for all $u \in \mathscr{C}$.
\end{theorem}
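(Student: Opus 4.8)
The plan is to recast \eqref{eq:Lap_parameterized} as a one-parameter family of $G$-equivariant compact vector fields on a Hilbert space and to detect bifurcation by comparing the $G$-equivariant Leray--Schauder degree of that field on a small ball for $\alpha$ just below and just above $\alpha_0$. Let $\mathscr{H}$ denote the Sobolev completion of $C_0^\infty(D;V)$ in which \eqref{eq:Lap_parameterized} is posed, and let $j:=(-\triangle)^{-1}:L^2(D;V)\to\mathscr{H}$ be the compact solution operator of the Dirichlet Laplacian. A function $u$ solves \eqref{eq:Lap_parameterized} if and only if
\[
\mathscr{F}(\alpha,u):=u-j\bigl(f(\cdot,u)+\bm A(\alpha)u\bigr)=0 .
\]
By \ref{c3}--\ref{c4} the field $\mathscr{F}(\alpha,\cdot)$ is $G$-equivariant, and by the compactness of $j$ together with \ref{c2} it is a completely continuous perturbation of the identity. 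The trivial solutions form the line $\{(\alpha,0)\}$, whose linearization is $a(\alpha,\cdot):=\mathrm{Id}-j\,\bm A(\alpha)$; saying that $(\alpha_0,0)$ is an isolated critical point with deleted regular neighborhood $\alpha_0^-<\alpha_0<\alpha_0^+$ means precisely that $a(\alpha_0,\cdot)$ is singular while $a(\alpha_0^\pm,\cdot)$ are isomorphisms.

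First I would use \ref{c1} to replace $\mathscr{F}$ by its linearization near $0$: for $r>0$ small the affine homotopy $(t,u)\mapsto u-j\bigl(t\,f(\cdot,u)+\bm A(\alpha_0^\pm)u\bigr)$ is $G$-admissible on $\partial B_r(0)\subset\mathscr{H}$, since the $o(|u|)$ bound forces the linear part to dominate, so $\gdeg(\mathscr{F}(\alpha_0^\pm,\cdot),B_r(0))=\gdeg(a(\alpha_0^\pm,\cdot),B_r(0))$. The right-hand side is then evaluated by the product formula over the $G$-isotypic decomposition of $\mathscr{H}$: the Fourier modes $m\ge 0$ index the isotypic summands, and on the mode-$m$ summand $j\,\bm A(\alpha)$ has eigenvalues built from the spectrum of $\bm A(\alpha)$ and the positive zeros of the $m$-th Bessel function $J_m$. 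Hence
\[
\gdeg\bigl(a(\alpha_0^\pm,\cdot),B_r(0)\bigr)=\prod_{m\ge 0}\bigl(\deg_{\mathcal V_m}\bigr)^{e_m(\alpha_0^\pm)},
\]
where $\mathcal V_m$ is the $G$-irreducible obtained from the two-dimensional mode-$m$ representation of $O(2)$ twisted by the nontrivial character of $\bz_2$ (the twist records the oddness \ref{c3}), $\deg_{\mathcal V_m}\in A(G)$ is the associated basic degree, and $e_m(\alpha_0^\pm)$ counts, with geometric multiplicity, the eigenvalues of $\bm A(\alpha_0^\pm)$ dominating the positive zeros of $J_m$.

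Next I would introduce the bifurcation invariant $\omega(\alpha_0):=\gdeg(a(\alpha_0^-,\cdot),B_r(0))-\gdeg(a(\alpha_0^+,\cdot),B_r(0))\in A(G)$ and extract its coefficient at the amalgamated orbit type $(D_{2m}\,{}^{D_m}\!\times^{\bz_1}\bz_2)$. This is where the characterization of $A(O(2)\times\bz_2)$ from \cite{Ghanem1} is essential: one needs the expansion of $(\deg_{\mathcal V_m})^{e_m}$ in the orbit-type basis and, crucially, the fact that $(D_{2m}\,{}^{D_m}\!\times^{\bz_1}\bz_2)$ is maximal among the orbit types that can occur there, so that the factors indexed by $m'\ne m$ (and the even-geometric-multiplicity contributions, which enter as squares and hence act trivially on this coefficient) cannot perturb it. The hypothesis that the numbers of odd-geometric-multiplicity eigenvalues of $\bm A(\alpha_0^-)$ and of $\bm A(\alpha_0^+)$ dominating an odd number of positive zeros of $J_m$ differ in parity then forces the coefficient of $(D_{2m}\,{}^{D_m}\!\times^{\bz_1}\bz_2)$ in $\omega(\alpha_0)$ to be odd; in particular $\omega(\alpha_0)\ne 0$.

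Finally, the conclusion comes from the standard degree-theoretic dichotomy. If no branch of non-trivial solutions of \eqref{eq:Lap_parameterized} emanated from $(\alpha_0,0)$, we could choose a bounded open $\Omega\subset\br\times\mathscr{H}$ containing $(\alpha_0,0)$ whose boundary carries only trivial solutions, and an admissible homotopy along $\alpha$ would then give $\gdeg(\mathscr{F}(\alpha_0^-,\cdot),B_r(0))=\gdeg(\mathscr{F}(\alpha_0^+,\cdot),B_r(0))$, contradicting $\omega(\alpha_0)\ne 0$. Thus a connected branch $\mathscr{C}$ with branching point $(\alpha_0,0)$ exists. To pin down the symmetry I would rerun the entire argument inside the fixed-point subspace $\mathscr{H}^{H}$, $H:=D_{2m}\,{}^{D_m}\!\times^{\bz_1}\bz_2$, which is invariant under $\mathscr{F}$ by equivariance and on which the mode-$m$ summand survives; the maximality of $(H)$ ensures that the nonzero coefficient detected above descends to a nonzero $W(H)$-equivariant bifurcation invariant on $\mathscr{H}^H$, so the branch may be taken inside $\br\times\mathscr{H}^H$ and every $u\in\mathscr{C}$ satisfies $(G_u)\ge(H)$. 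I expect the one genuinely delicate step to be the Burnside-ring bookkeeping of the third paragraph — showing that the coefficient at the amalgamated type in the odd power of $\deg_{\mathcal V_m}$ is odd and is untouched by the remaining factors — which is exactly what the results imported from \cite{Ghanem1} are designed to deliver.
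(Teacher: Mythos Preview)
Your overall strategy coincides with the paper's: recast \eqref{eq:Lap_parameterized} as a $G$-equivariant compact vector field, linearize near the origin via \ref{c1}, form the local bifurcation invariant $\omega_G(\alpha_0)$ as the difference of the two $G$-degrees, expand each degree as a Burnside-ring product of basic degrees over the Fourier-mode isotypic decomposition, and conclude once $\operatorname{coeff}^{H_m}(\omega_G(\alpha_0))\neq 0$. The paper absorbs your last two paragraphs into a single appeal to the abstract equivariant Krasnosel'skii theorem (Theorem~\ref{th:Kras}); your proposal to rerun the argument inside $\mathscr H^{H}$ is essentially how that theorem is proved, so nothing is lost.

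One justification needs correction. You claim that $(H_m)=(D_{2m}{}^{D_m}\!\times^{\bz_1}\bz_2)$ is maximal and that therefore the factors $\deg_{\mathcal W_{m'}^-}$ with $m'\neq m$ ``cannot perturb'' $\operatorname{coeff}^{H_m}$. This is not right: $(H_m)$ is maximal only inside $\Phi_0(G;\mathscr H_m\setminus\{0\})$, and the product rule $(H_m)\cdot(H_{m'})=2[\mathcal B(\{m,m'\})](H_{\gcd(m,m')})$ shows that when $m\mid m'$ and $\mathcal B(\{m,m'\})$ holds, multiplication by $\deg_{\mathcal W_{m'}^-}$ does alter the $(H_m)$-coefficient (for instance from $-1$ to $+1$). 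What the paper actually extracts from \cite{Ghanem1}, via Theorem~\ref{thm:product_Nbdegs} and Proposition~\ref{prop:main_local_bifurcation}, is that every such cross-term carries an explicit factor of $2$, so the other modes can change the coefficient but never its \emph{parity}. Under the hypothesis one gets $m\in J(\alpha_0)$ and hence
\[
\operatorname{coeff}^{H_m}\bigl(\omega_G(\alpha_0)\bigr)=\pm 1+2(\text{integer}),
\]
which is odd and therefore nonzero. Your conclusion stands, but the mechanism is a parity argument in $A(G)$, not a maximality argument.
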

Moreover, the global bifurcation result presented in \cite{Ghanem} $(\rm i)$ requires a monotonic dependence of the eigenvalues of $A: \br \rightarrow L(V)$ on the bifurcation parameter $\alpha$, $(\rm ii)$ assumes the existence of only finitely many critical points and $(\rm iii)$ involves degree invariant computations at every critical point of \eqref{eq:Lap_parameterized}. We are able to determine the global properties and ensure non-radial symmetries in branches of solutions bifurcating from every odd Fourier mode without any monotonicity assumptions and involving only a single degree invariant computation. The following global equivariant bifurcation result might be compared with Theorem $4.3$ and Proposition $5.2$ in \cite{Ghanem}:
\begin{theorem}
Let $m > 0$ be an {\bf odd}, positive Fourier mode and assume that \eqref{eq:Lap_parameterized} admits a finite number of critical points $\{(\alpha_k, 0)\}_{k=0}^N$ enumerated such that, if $i< j$, then $\alpha_i < \alpha_j$ for all $i,j \in \{0,1,\ldots,N\}$ and with associated deleted regular neighborhoods. If the 
number of eigenvalues with odd geometric multiplicity that are larger than an odd number of the positive zeros of the $m$-th Bessel function of the first kind differ in parity for the matrices $\bm A(\alpha_0^-): V \to V$ and $A(\alpha_N^+): V \to V$, then \eqref{eq:Lap_parameterized} admits an {\bf unbounded} branch of {\bf non-radial} solutions $\mathscr{C}$ with branching point $(\alpha_0, 0)$ satisfying $(G_u) \geq (D_{2m} {}^{D_m}\times^{\bz_1} \bz_2)$ for all $u \in \mathscr{C}$.
\end{theorem}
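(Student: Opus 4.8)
\medskip
\noindent\textit{Strategy of proof.} The plan is to recast \eqref{eq:Lap_parameterized} as a one‑parameter family of $G$‑equivariant compact vector fields on a Hilbert space and to run an equivariant global bifurcation argument whose bifurcation invariant is extracted from the characterization of the Burnside ring $A(G)$ in \cite{Ghanem1}. Concretely, I would pass to $\mathscr{H} := H^1_0(D;V)$, equipped with its natural isometric $G$‑action ($O(2)$ acting through $D$, the $\bz_2$ factor by $u \mapsto -u$), invert the Dirichlet Laplacian, and rewrite \eqref{eq:Lap_parameterized} as the fixed‑point equation
\[
\mathscr{F}(\alpha, u) := u - (-\triangle)^{-1}\big(\bm A(\alpha) u + N_f(u)\big) = 0,
\]
where $N_f$ is the Nemytskii operator of $f$ and $(-\triangle)^{-1}$ is the compact solution operator of the Dirichlet problem. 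Conditions \ref{c3}--\ref{c4} make each $\mathscr{F}(\alpha, \cdot)$ a $G$‑equivariant compact vector field; condition \ref{c1} guarantees that $(\alpha, 0)$ is always a solution and that the only candidate bifurcation points are the critical points $(\alpha_k, 0)$, at which $\id - (-\triangle)^{-1}\bm A(\alpha_k)$ fails to be invertible; and condition \ref{c2} yields, through the usual a priori estimate, a uniform bound $\|u\| \le R_0$ on all solutions with $\alpha$ in the compact interval $[\alpha_0^-, \alpha_N^+]$---the compactness input the global step will need.

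Next I would restrict to the fixed‑point subspace $\mathscr{H}^{H}$ of $H := D_{2m} {}^{D_m}\times^{\bz_1} \bz_2$, which $\mathscr{F}(\alpha, \cdot)$ preserves. Two facts about $H$ are decisive. First, $\mathscr{H}^{H}$ contains no nonzero radial function: the element of $H$ whose spatial part is the rotation by $\pi/m$ acts on $V$ by $-\id$, so a radial $u \in \mathscr{H}^{H}$ must satisfy $u = -u$; hence every nontrivial solution found inside $\mathscr{H}^{H}$ is non‑radial and has $(G_u) \ge (H)$. Second, and more substantially, the description of products in $A(G)$ from \cite{Ghanem1} pins down the $(H)$‑coordinate of the $G$‑equivariant Leray--Schauder degree: using the spectral decomposition of $(-\triangle)^{-1}$---whose eigenvalues are the reciprocals of the squares of the positive zeros of the Bessel functions $J_{m'}$, with eigenspaces carrying the weight‑$m'$ representation of $O(2)$ tensored with $V$---the local bifurcation index at $\alpha_k$, projected to the $(H)$‑coordinate, is a power of the basic degree attached to the $m$‑th Fourier mode, the exponent being the number of eigenvalues of $\bm A(\alpha)$ (counted as in the statement) that cross the $J_m$‑zeros as $\alpha$ passes through $\alpha_k$. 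Because $m$ is \emph{odd}, an even such exponent contributes trivially to the $(H)$‑coordinate while an odd one does not; telescoping over $k = 0, \dots, N$ between the regular values $\alpha_0^-$ and $\alpha_N^+$ shows that the parity hypothesis on the eigenvalue counts for $\bm A(\alpha_0^-)$ and $\bm A(\alpha_N^+)$ is exactly the assertion that the net $(H)$‑coordinate bifurcation invariant over $[\alpha_0^-, \alpha_N^+]$ is nontrivial.

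Finally I would invoke the equivariant Rabinowitz alternative (cf.\ \cite{Ghanem, book-new}) for the connected component $\mathscr{C}$ of the closure of the nontrivial solution set of $\mathscr{F} = 0$ in $\br \times \mathscr{H}^{H}$ issuing from $(\alpha_0, 0)$. A single degree computation on the cylinder $(\alpha_0^-, \alpha_N^+) \times B_R(\mathscr{H}^{H})$---legitimate because the caps $\alpha = \alpha_0^-, \alpha_N^+$ are regular values and, for $R > R_0$, $\partial B_R$ carries no solution---combined with the nontriviality of the net invariant, shows that not every component of the solution set meeting $[\alpha_0^-, \alpha_N^+] \times \{0\}$ can be bounded: a bounded component is compact, so the local bifurcation indices along the critical points it meets sum to zero, and such partial sums cannot exhaust a nonzero total. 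An unbounded component therefore exists, and since $\alpha_0$ is the least critical point the bookkeeping attributes its branching point to $(\alpha_0, 0)$; being contained in $\br \times \mathscr{H}^{H}$, it consists entirely of non‑radial solutions with $(G_u) \ge (H)$.

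The main obstacle is the middle step---reconciling the nested parity condition of the hypothesis with the arithmetic of $A(G)$ in \cite{Ghanem1}, in particular verifying that squaring the relevant basic degree annihilates its $(H)$‑coordinate precisely when $m$ is odd---together with the closely related point of confirming that the unbounded component may be pinned to the first critical point $(\alpha_0, 0)$ rather than to some later $(\alpha_k, 0)$.
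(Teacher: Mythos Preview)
Your overall architecture is sound, but the fixed-point reduction you choose differs from the paper's, and this difference is precisely where the oddness hypothesis lives. The paper does \emph{not} restrict to $\mathscr{H}^{H_m}$; it restricts to $\mathscr{H}^{\bm K}$ for the central subgroup $\bm K=\{(1,1),(-1,-1)\}$ and then runs the Rabinowitz alternative with respect to the residual group $\bm G=G/\bm K\simeq O(2)$ (Theorem~\ref{th:Rabinowitz-alt-K}). Since $\mathscr{H}^{\bm K}=\overline{\bigoplus_{m'\text{ odd}}\mathscr{H}_{m'}}$, the $m$-th Fourier mode survives this reduction exactly when $m$ is odd, and \emph{that} is the only place the oddness of $m$ is used. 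Your assertion that ``because $m$ is odd, an even such exponent contributes trivially to the $(H)$-coordinate while an odd one does not'' misplaces the hypothesis: the involution $(\deg_{\mathcal W^-_m})^2=(G)$ is Lemma~\ref{lemm:involutive_bdeg} and holds for every $m$, so the parity of the exponent always governs the $(H_m)$-coefficient, irrespective of whether $m$ is odd. In your $H_m$-reduction the exclusion of radial functions (which you argue correctly) works for every $m>0$, so if your argument were complete it would prove something strictly stronger than the stated theorem---a sign that the oddness has been inserted at the wrong step.

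There is also a gap in how you match the invariant to the space. You run Rabinowitz inside $\br\times\mathscr{H}^{H_m}$, but the quantity you compute is the $(H_m)$-coordinate of the \emph{full} $G$-equivariant degree, which is the invariant controlling the alternative on $\br\times\mathscr{H}$ (Theorem~\ref{th:Rabinowitz-alt}), not on the fixed-point subspace. The alternative in $\mathscr{H}^{H_m}$ requires the Leray--Schauder degree of the restriction $\mathscrbf A(\alpha)|_{\mathscr{H}^{H_m}}$ (or the $W(H_m)$-equivariant degree), and since $\mathscr{H}^{H_m}$ contains not only $\mathscr{H}_m$ but every $\mathscr{H}_{m'}$ with $(H_m)\le(H_{m'})$, that restricted degree depends on spectral data at Fourier modes about which the hypothesis says nothing. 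The paper avoids this by working in the larger space $\mathscr{H}^{\bm K}$, where the residual $O(2)$-action is rich enough to separate the odd modes: the telescoping computation of Proposition~\ref{prop:main_global_bifurcation} shows that under the hypothesis the $(H_m)$-coefficient of $\sum_k\omega_G(\alpha_k)$ is \emph{odd} (it equals $\pm1$ plus an even correction), and this nontriviality transfers to the $\bm G$-invariant $\sum_k\omega_{\bm G}(\alpha_k)$ governing Theorem~\ref{th:Rabinowitz-alt-K}. If you wish to salvage the $H_m$-reduction, you must replace the $(H_m)$-coefficient by the appropriate $W(H_m)$-equivariant invariant and verify its nontriviality directly; otherwise, adopting the $\bm K$-reduction closes the argument and explains the oddness restriction simultaneously. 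Your reservation about pinning the branch to $(\alpha_0,0)$ rather than some later $(\alpha_k,0)$ is well-founded; the Rabinowitz argument as stated yields an unbounded branch emanating from \emph{some} critical point.
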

The remainder of the paper is organized as follows:
In Section \ref{sec:functional_space}, we prepare the problems 
\eqref{eq:Lap} and  \eqref{eq:Lap_parameterized} for application of the $G$-equivariant Leray-Schauder degree. This involves functional reformulation in an appropriate Sobolev space, allowing us to derive practical formulae for a pair of degree invariants, each defined in terms of the Burnside ring products of a finite number of basic degrees associated with the irreducible $G$-representations, whose non-triviality is equivalent to the existence of non-trivial solutions to \eqref{eq:Lap} and the emergence of branches of non-trivial solutions to \eqref{eq:Lap_parameterized}, respectively. Since every finite product of Burnside ring elements can be expressed in terms of a finite number of products between Burnside ring generators, we demonstrate in Sections \ref{sec:existence_results} and \ref{sec:bifurcation_results}, how the novel characterization of $A(G)$ presented in \cite{Ghanem1} leads to our main existence and bifurcation results.
\section{Functional Space Reformulation} \label{sec:functional_space}
A prerequisite for the application of any $G$-equivariant degree-theory based argument to the problems \eqref{eq:Lap} and \eqref{eq:Lap_parameterized}
is their reformulation as fixed point equations in an appropriate functional $G$-space with a nonlinear operator in the form of a $G$-equivariant compact perturbation of identity.
Following \cite{BalHooton}, we consider the Sobolev space $\mathscr H:=H_{0}^2(D,\br^k)$ equipped with the usual norm
\[
\|u\|:=\max\{\|D^s u\|_{L^2}: |s|\le 2\}, \quad s:=(s_1,s_2), \; D^s u:=\frac{\partial ^{|s|}}{\partial^{s_1}x\partial^{s_2}y},
\]
and with the isometric $G$-action
\begin{align}\label{def:G_action}
(e^{i\vartheta},\pm 1)u(r,\theta) := \pm u(r, \theta + \vartheta), \quad (\kappa,\pm 1)u(r,\theta) := \pm u(r, -\theta), \quad   u \in \mathscr H, 
\end{align}
where $\vartheta \in [0,2 \pi]$ and $\kappa$ is the reflection generator in $O(2)$. 
We also consider the Laplacian operator 
\[
\mathscr L:\mathscr H\to L^2(D;\br^k), \quad \mathscr Lu:=-\triangle u,
\]
the Nemytski operator
\begin{align*}
N:L^q(D;\bbR^N) \to L^2(D;\bbR^N), \quad N(v)(z) := f(z,v(z)), \quad z\in \overline D,\quad \alpha \in \bbR,
\end{align*}
and the Sobolev embedding
\[
j:\mathscr H\to L^q(D;\br^k), \quad j(u(z)) := u(z),
\]
these last two defined for any $q > \max\{1,2\nu\}$ (for example, it is enough to take $q:= 2\beta$, cf. assumptions (A3) and (A4)). 
\vs
Notice that \eqref{eq:Lap} can now be reformulated as the fixed point equation
\[
\mathscr L u = N(j(u)) + Aj(u),
\]
and \eqref{eq:Lap_parameterized} as the equation
\[
\mathscr L u = N(j(u)) + \bm A(\alpha)j(u).
\]
\section{On the Existence of Non-Radial Solutions to \eqref{eq:Lap}} \label{sec:existence_results}
Since $\mathscr L$ is a linear isomorphism, $N$ is bounded and $j$ is compact, the operator 
\begin{align} \label{def:operator_F}
\mathscr F: \mathscr H \to \mathscr H, \quad \mathscr F(u) := u - \mathscr L^{-1}(N(j(u)) + A j(u)), 
\end{align}
is a compact perturbation of the identity. Notice that system \eqref{eq:Lap} is equivalent to the operator equation
\begin{equation}\label{eq:operator-eq}
\mathscr F(u) = 0,
\end{equation}
in the sense that in the sense that $u \in \mathscr H$ is a solution to \eqref{eq:Lap} if and only if it satisfies \eqref{eq:operator-eq}.
In what follows, we will denote by
\begin{align*}
    \mathscr A: \mathscr H \rightarrow \mathscr H, \quad D\mathscr F(0)=:\mathscr A = \id - \mathscr L^{-1}(A \circ j),
\end{align*}
the linearization of \eqref{def:operator_F} at the origin in $\mathscr H$. Under the conditions \ref{c1}---\ref{c3} and provided that $\mathscr A$ is an isomorphism, Balanov et. al 
establish $(\rm i)$ that $\mathscr F$ is a $G$-equivariant completely continuous field with respect to the $G$-action \eqref{def:G_action}, $(\rm ii)$ that all of the non-trivial solutions to \eqref{eq:Lap} can be confined to an annulus of the form $\Om := B_R(\mathscr H) \setminus B_{\varepsilon}(\mathscr H) \subset \mathscr H$ for some sufficiently small choice of $\varepsilon > 0$ and sufficiently large choice of $R > 0$ and $(\rm iii)$ that $(\mathscr F,\Om)$ constitutes an admissible $G$-pair, such that the existence of non-trivial solutions to \eqref{eq:Lap} is equivalent to non-triviality of the degree invariant
\[
\gdeg(\mathscr F,\Om) = (G) - \gdeg(\mathscr A, B_1(\mathscr H)),
\]
where $\gdeg$ is the $G$-equivariant Leray-Schauder degree and $(G) \in A(G)$ is the unit element of the Burnside Ring (see Appendix \ref{sec:appendix} for definition of the Burnside Ring $A(G)$, definition of $G$-admissibility/the set of all admissible $G$-pairs $\mathscr M^G$ and for definition of the $G$-equivariant  degree Leray-Schauder degree $\gdeg:\mathscr M^G \rightarrow A(G)$ and cf. \cite{BalHooton} for proof of the above statement). Consequently, we have the following sufficient condition for the existence of a non-trivial solution to \eqref{eq:Lap}:
\begin{lemma}\label{lemm:sufficient_condition}
If for some orbit type $(H) \in \Phi_0(G) \setminus \{(G) \}$ one has
\[
\operatorname{coeff}^{H}\left( \gdeg(\mathscr A, B(\mathscr H)) \right) \neq 0,
\]
then there exists a nontrivial solution to \eqref{eq:Lap} $u \in \mathscr H$ with an isotropy subgroup $G_u \leq G$ satisfying $(G_u) \geq (H)$.  
\end{lemma}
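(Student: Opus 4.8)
The plan is to derive Lemma~\ref{lemm:sufficient_condition} directly from the three facts about $(\mathscr F, \Om)$ that Balanov et.\ al.\ establish and that the excerpt already records: namely, that $\mathscr F$ is a $G$-equivariant completely continuous field, that $(\mathscr F, \Om)$ is an admissible $G$-pair with $\Om := B_R(\mathscr H) \setminus \overline{B_\varepsilon(\mathscr H)}$ containing every non-trivial solution, and that the degree decomposes as
\[
\gdeg(\mathscr F, \Om) = (G) - \gdeg(\mathscr A, B_1(\mathscr H)).
\]
First I would record the contrapositive formulation: it suffices to show that if \eqref{eq:Lap} has no non-trivial solution whose isotropy contains a conjugate of $H$, then $\operatorname{coeff}^H\bigl(\gdeg(\mathscr A, B(\mathscr H))\bigr) = 0$. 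Equivalently, and more usefully, I would show that a non-zero coefficient at $(H)$ in the degree forces the existence of such a solution.

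The key step is the \emph{existence property} (or ``solution property'') of the $G$-equivariant Leray--Schauder degree, recalled in the appendix: if $\gdeg(\mathscr F, \Om)$ has a non-zero coefficient at an orbit type $(H)$, then the zero set $\mathscr F^{-1}(0) \cap \Om$ contains a point $u$ with $(G_u) \geq (H)$. So I would proceed as follows. Fix $(H) \in \Phi_0(G) \setminus \{(G)\}$ with $\operatorname{coeff}^H\bigl(\gdeg(\mathscr A, B(\mathscr H))\bigr) \neq 0$. Since $(H) \neq (G)$, the unit element $(G) \in A(G)$ contributes nothing to the coefficient at $(H)$, so from the decomposition above we get
\[
\operatorname{coeff}^H\bigl(\gdeg(\mathscr F, \Om)\bigr) = -\operatorname{coeff}^H\bigl(\gdeg(\mathscr A, B_1(\mathscr H))\bigr) \neq 0.
\]
Here I should note that $\gdeg(\mathscr A, B(\mathscr H)) = \gdeg(\mathscr A, B_1(\mathscr H))$ since the linear isomorphism $\mathscr A$ has the origin as its only zero and the degree of a linear field is independent of the radius of the ball --- a routine observation I would state but not belabor. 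Now apply the existence property to the admissible pair $(\mathscr F, \Om)$: the non-triviality of the $(H)$-coefficient yields $u \in \Om$ with $\mathscr F(u) = 0$ and $(G_u) \geq (H)$. Since $u \in \Om$ and $\Om$ excludes a neighborhood of the origin, $u \neq 0$; and by the equivalence \eqref{eq:operator-eq}, $u$ is a non-trivial solution to \eqref{eq:Lap} with the claimed isotropy bound.

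The only subtlety --- and the step I expect to require the most care in the write-up --- is the bookkeeping around which ball the linearized degree is computed on and the justification that $\operatorname{coeff}^H$ is additive over the difference $(G) - \gdeg(\mathscr A, B_1(\mathscr H))$ in $A(G)$, together with checking that $(H) \neq (G)$ genuinely kills the contribution of the unit $(G)$. This last point is immediate from the fact that, in the basis of $A(G)$ indexed by conjugacy classes of subgroups, the unit $(G)$ is precisely the generator at the top orbit type, so its coefficient at any $(H) < (G)$ vanishes. Everything else is a direct invocation of the existence property of $\gdeg$ and of the equivalence between \eqref{eq:Lap} and \eqref{eq:operator-eq}, both already in hand. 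I would close by remarking that the same argument shows the isotropy can in fact be sharpened to a maximal orbit type above $(H)$ among those with non-vanishing coefficient, but the stated conclusion $(G_u) \geq (H)$ is all that is needed downstream.
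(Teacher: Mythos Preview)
Your proposal is correct and is exactly the argument the paper has in mind: the paper does not give a separate proof of this lemma but states it as an immediate consequence of the identity $\gdeg(\mathscr F,\Om)=(G)-\gdeg(\mathscr A,B_1(\mathscr H))$ together with the existence property of the equivariant degree, which is precisely what you spell out. Your additional remarks on the radius of the ball and the additivity of $\operatorname{coeff}^H$ are routine and in keeping with how the paper handles these points elsewhere.
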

In this way, the problem of finding non-trivial solutions to \eqref{eq:Lap} has been reformulated as the problem of computing the coefficients of non-unit orbit types (i.e. orbit types belonging to the set $\Phi_0(G) \setminus \{(G)\}$) in the Burnside Ring element $\gdeg(\mathscr A, B(\mathscr H))$. Since the $G$-equivariant Leray-Schauder degree provides a full equivariant classification to the solution set of the equation \eqref{eq:operator-eq}, Lemma \ref{lemm:sufficient_condition}, moreover, represents a framework for identifying the minimal spatio-temporal symmetries of any predicted solution to \eqref{eq:Lap}.
\subsection{Towards a Computational Formula for the Degree $\gdeg(\mathscr A, B(\mathscr H))$}
In order to effectively make use of Lemma \ref{lemm:sufficient_condition} for proving the existence of non-trivial solutions to \eqref{eq:Lap}, we must construct a computational formula for the degree calculation $\gdeg(\mathscr A, B(\mathscr H)) \in A(G)$. Our first step in this direction is to describe the $G$-isotypic decomposition of $\mathscr H$ (for an approachable exposition of the isotypic decomposition and the methods employed in this section in general, we refer our readers to the survey paper \cite{survey}). As will be seen in Section \ref{sec:bifurcation_results}, this decomposition of our functional space in terms of a direct sum of irreducible $G$-representations will also prove useful our analysis of the equivariant bifurcation problem \eqref{eq:Lap_parameterized}.
\vs
Let's begin by describing the set of all irreducible $G$-representations. For each $m \in \bn$, we denote by $\mathcal W_m \simeq \bc$ the irreducible $O(2)$-representation equipped with the $m$-folded $O(2)$-action
\begin{align*}
    e^{i \vartheta}w:= e^{i m \vartheta}w, \quad \kappa w:= \pm \bar{w},  \quad \theta \in SO(2), \; \kappa \in O(2), \; w \in \mathcal W_m,
\end{align*}
by $\mathcal W_0 \simeq \br$ the irreducible $O(2)$-representation on which $O(2)$ acts trivially and by $\{ \mathcal W_m^- \}_{m \geq 0}$ the corresponding list of irreducible $O(2) \times \bz_2$-representations, where the superscript is meant to indicate that each of the irreducible $O(2)$-representations has been equipped with the antipodal $\mathbb{Z}_2$-action. 
\vs
We notice, alongside Balanov et al in \cite{BalHooton}, that the spectrum of the Laplacian operator is given in terms of the positive zeros $\{ \sqrt{s_{m,n}}\}_{n \in \bn}$ of the Bessel functions of the first kind $\{J_m:\br \rightarrow \br\}_{m \in \bn \cup \{0\}}$ as follows
\[
\sigma(\mathscr L)= \{s_{m,n} : (m,n) \in \bn \cup \{0\} \times \bn \},
\]
and that each of the associated eigenspaces
\[
\mathscr E_{m,n} := \{J_m(\sqrt{s_{m,n}}r) (\cos(m\theta)a + \sin(m\theta)b): a,b \in V\},
\]
may be equipped with corresponding $G$-action
\[
(e^{i\vartheta},\pm1)u(r,\theta) := \pm u(r,\theta + m \vartheta), \quad u \in \mathscr E_{m,n},
\]
such that
\[
\mathscr E_{m,n} \simeq \mathcal W_m^- \otimes V, \quad m \in \bn_0.
\]
Consequently, $\mathscr H$ admits the $G$-isotypic decomposition
\begin{align} \label{eq:G_isotypic_decomp}
  \mathscr H := \overline{\bigoplus_{m = 0}^{\infty} \mathscr H_{m}}, \quad \mathscr H_{m} := \overline{\bigoplus_{n=1}^{\infty} \mathscr E_{m,n}},  
\end{align}
where the closure is taken in $\mathscr H$. To be clear, every $G$-isotypic component $\mathscr H_m$ is modeled on the irreducible $G$-representation $\mathcal W_{m}^-$.
\vs
As a $G$-equivariant linear operator, $\mathscr A:\mathscr H \rightarrow \mathscr H$ respects the decomposition \eqref{eq:G_isotypic_decomp} in the sense that
$\mathscr A(\mathscr H_m) \subset \mathscr H_m$ for all $m \geq 0$. 
Adopting the notation,
\[
\mathscr A_{m} := \mathscr A|_{\mathscr H_{m}}: \mathscr H_{m} \rightarrow \mathscr H_{m},
\]
it follows that the spectrum of $\mathscr A$ is given by
\begin{align*} 
 \sigma (\mathscr A)= \bigcup\limits_{m=0}^{\infty}  \sigma(\mathscr A_{m}), \quad \sigma(\mathscr A_{m}) = \{ 1 - \frac{\mu_j}{s_{m,n}} : \mu_j \in \sigma(A), \; n \in \bn\}.
\end{align*}
Enumerating the spectrum of $A$
\[
\sigma(A) = \{\mu_1, \mu_2, \ldots, \mu_r\},
\]
and indicating by $m_j$ the geometric multiplicity of the corresponding eigenvalue $\mu_j \in \sigma(A)$, each eigenvalue of $\mathscr A$ can be identified with an index triple $(m,n,j) \in \bn \cup \{0\} \times \bn \times \{1,\ldots,r\}$ as follows
\begin{align}\label{def:operator_eigenvalues}
\mu_{m,n,j} := 1 - \frac{\mu_j}{s_{m,n}} = \frac{s_{m,n} - \mu_j}{s_{m,n}}.
\end{align}
Therefore, we can ensure the invertibility of $\mathscr A$
with the non-degeneracy assumption:
\begin{enumerate}[label=($D$)]
    \item\label{d} For all $\mu \in \sigma(A)$, $m \in \bn \cup \{0\}$ and $n \in \bn$, one has $\mu \neq s_{m,n}$.
\end{enumerate}
Under the assumption \ref{d}, the product property of the $G$-equivariant Leray-Schauder degree (cf. Appendix \ref{sec:appendix}) permits us to express the degree $\gdeg(\mathscr A, B(\mathscr H))$ in terms of a Burnside Ring product of the $G$-equivariant Leray-Schauder degrees of the Fourier mode restrictions $\mathscr A_{m}: \mathscr H_{m} \rightarrow \mathscr H_{m}$ of the $G$-equivariant linear isomorphism $\mathscr A: \mathscr H \rightarrow \mathscr H$ to the open unit balls $B(\mathscr  H_{m}):= \{ u \in \mathscr H_{m} \; : \; \| u \|_{\mathscr H} < 1 \}$ as follows
\begin{equation}\label{eq:gdegA_product_property_decomp}
  \gdeg(\mathscr A, B(\mathscr H)) = \prod\limits_{m=0}^\infty \gdeg( \mathscr A_{m}, B(\mathscr H_{m})).  
\end{equation}
On the other hand, each degree $\gdeg( \mathscr A_{m}, B(\mathscr H_{m}))$ is fully specified by the corresponding negative spectra $\sigma_-(\mathscr A_{m}) := \{ \mu_{m,n,j} < 0 : n \in \bn, \; j \in \{0,1,\ldots,r\}\}$ according to the formula
\begin{equation}\label{eq:negative_spectrum_gdegA}
\gdeg(\mathscr A_{m}, B( \mathscr H_{m})) = \prod_{\mu_{m,n,j} \in \sigma_-(\mathscr A_{m})} (\deg_{\mathcal W^-_{m}})^{m_j}
\end{equation}
where $\deg_{\mathcal W^-_m} \in A(G)$ is the basic degree associated with the irreducible $G$-representation $\mathcal W^-_{m}$ (cf. Appendix \ref{sec:appendix}) and $(G) \in A(G)$ is the unit element of the Burnside 
Ring. 
Putting together \eqref{eq:gdegA_product_property_decomp} and \eqref{eq:negative_spectrum_gdegA}, we introduce some notations to keep track of the indices 
\begin{align*}
   \Sigma := \left\{ (m,n,j) : m \in \mathbb{N} \cup \{0\}, \; n \in \bn, \; j \in \{0,1,\ldots,r \} \right\},
\end{align*}
which contribute non-trivially to $\gdeg(\mathscr A, B(\mathscr H))$. Specifically, the \textit{negative} spectrum of $\mathscr A: \mathscr H \rightarrow \mathscr H$ is accounted for with the index set
\begin{align}
\label{def:index_set_Sigma}
   \Sigma_0 := \left\{ (m,n,j) \in \Sigma :  \mu_{m,n,j} < 0 \right\}.
\end{align}
Combining this with formulas \eqref{eq:gdegA_product_property_decomp} and \eqref{eq:negative_spectrum_gdegA} yields
\begin{align} \label{eq:computational_formula_gdegA}
    \gdeg(\mathscr A, B(\mathscr H)) \; = \prod\limits_{(m,n,j) \in \Sigma_0} (\deg_{\mathcal W^-_{m}})^{m_j}.
\end{align}
\subsection{Basic Degrees and Multiplication in the Burnside Ring $A(G)$}
All that remains is to describe a strategy for identifying orbit types in the Burnside Ring product of a finite number of basic degrees.
\vs
In \cite{BalHooton}, Balanov et al. notice that the basic degree associated with the irreducible $G$-representation $\mathcal W_{m}^-$ is given by 
\begin{align*}
\deg_{\mathcal W_{m}^-} =
\begin{cases}
    (G) - (O(2) \times \bz_1) & \text{ if } m = 0; \\
    (G) - (D_{2m} {}^{D_m}\times^{\bz_1} \bz_2) & \text{ if } m  > 0, 
\end{cases}
\end{align*}
where $O(2) \times \bz_1 \leq G$ is the standard product group and the notation
$D_{2m} {}^{D_m}\times^{\bz_1} \bz_2 \leq G$ is used to indicate the set of all pairs $(x,y) \in D_{2m} \times \bz_2$ for which the orders of $x \in D_{2m}$ and $y \in \bz_2$ share the same parity, i.e. the amalgamated subgroup
\[
D_{2m} {}^{D_m}\times^{\bz_1} \bz_2 := \{ (x,y) \in D_{2m} \times \bz_2 : 2 \mid (|x| + |y|) \}.
\]
\begin{remark}\label{rm:amalgamated_notation}
Amalgamated notation is a short-hand for easy identification of the subgroups of a product group such as $G$. For more details on this topic, the reader may consult the manuscripts \cite{book-new, AED} or the appendices of either of the articles \cite{BalHooton, Ghanem}.
\end{remark}

For the sake of simplifying our exposition throughout the remainder of this paper, we adopt the notation 
\begin{align} \label{def:maximal_orbit_type}
(H_{m}) := (D_{2m} {}^{D_m}\times^{\bz_1} \bz_2), \quad m > 0. 
\end{align}
We will also employ the {\it Iverson brackets} which map any logical predicate $P$ to the set $\{0,1\}$ according to the rule
\begin{align} \label{notation_iverson}
    [P] := \begin{cases}
        1 \quad & \text{ if } P \text{ is true}; \\
        0 \quad & \text{ otherwise},
    \end{cases}
\end{align}
and, for any finite set of natural numbers $I \subset \bn$, indicate by $\mathcal B(I)$ the Boolean expression
\begin{align}
\mathcal B(I) \equiv \text{ `` } \text{Every pair of indices } x,y \in I \text{ satisfies either one of the relations } 2 | \frac{(x\pm y)}{\gcd(x,y)}. \text{"}
\end{align}
Since the orbit type $(H_{m})$ is a maximal element in the isotropy lattice $\Phi_0(G; \mathscr H_{m}\setminus\{0\})$, its relations to other orbit types with respect to the natural ordering of $\Phi_0(G)$ can be fully characterized as follows
\[
(H_{m}) \leq (K) \iff (K) = (G) \text{ or } (K) = (H_{m'}) \text{ with } m \leq m' \text{ and } \mathcal B(\{m,m'\}).
\]
It follows, from the recurrence formula (cf. Appendix \eqref{sec:appendix}) for multiplication in the Burnside Ring, that one has
\begin{align*}
\operatorname{coeff}^{H_{s}}((H_{m}) \cdot (K)) = 
\begin{cases}
1 & \text{ if } (K) = (G) \text{ and } s = m; \\
2 & \text{ if } (K) = (H_{m'}), \; s = \gcd(m,m') \text{ and } \mathcal B(\{m,m'\}); \\
0 & \text{ otherwise},
\end{cases}
\end{align*}
equivalently
\begin{align*}
\operatorname{coeff}^{H_{s}}((H_{m}) \cdot (K)) = [(K) = (G)][s=m] + 2[(K) = (H_{m'})][s = \gcd(m,m')][\mathcal B(\{m,m'\})].  
\end{align*}
With these preliminaries out of the way, we can now derive some results characterizing the behavior of the orbit type $(H_{m})$ in the Burnside Ring product of any finite collection of basic degrees
\begin{align} \label{def:finite_collection_bdeg}
    \left\{ \deg_{\mathcal W_{m_k}^-} \in A(G)  :  m_k \in \mathbb{N} \right\}_{k=1,2,\ldots,N}.
\end{align}
Let's begin with the observation that basic degrees are involutive elements in $A(G)$:
\begin{lemma} \label{lemm:involutive_bdeg}
For any $m \in \bn$, one has
    \[
    \deg_{\mathcal W_{m}^-} \cdot \deg_{\mathcal W_{m}^-} = (G).
    \]
\end{lemma}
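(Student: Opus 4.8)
The plan is to compute $\deg_{\mathcal W_m^-}\cdot\deg_{\mathcal W_m^-}$ directly from the formula $\deg_{\mathcal W_m^-}=(G)-(H_m)$ quoted above, where $(H_m)=(D_{2m}\,{}^{D_m}\!\times^{\bz_1}\bz_2)$. Expanding by bilinearity of the Burnside ring product,
\[
\deg_{\mathcal W_m^-}\cdot\deg_{\mathcal W_m^-}=\bigl((G)-(H_m)\bigr)\cdot\bigl((G)-(H_m)\bigr)=(G)-2(H_m)+(H_m)\cdot(H_m),
\]
since $(G)$ is the unit element of $A(G)$. So everything reduces to computing the self-product $(H_m)\cdot(H_m)$, and it suffices to show $(H_m)\cdot(H_m)=2(H_m)-(G)$, equivalently $(H_m)\cdot(H_m)=2(H_m)-(G)$ in $A(G)$.

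First I would invoke the coefficient formula displayed just before the lemma, specialized to $(K)=(H_{m'})$ with $m'=m$:
\[
\operatorname{coeff}^{H_s}\bigl((H_m)\cdot(H_m)\bigr)=2\,[s=\gcd(m,m)]\,[\mathcal B(\{m,m\})]=2\,[s=m],
\]
because $\gcd(m,m)=m$ and the Boolean predicate $\mathcal B(\{m,m\})$ holds trivially (the only pair of indices is $x=y=m$, and $2\mid (x-y)/\gcd(x,y)=0$). This tells us the only orbit type with nonzero coefficient in $(H_m)\cdot(H_m)$ among the $(H_s)$ is $(H_m)$ itself, with coefficient $2$. It remains to pin down the coefficient of the unit orbit type $(G)$. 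One way: observe that the basic degree $\deg_{\mathcal W_m^-}$ is, by construction, the $G$-equivariant Leray–Schauder degree of $-\id$ on $B(\mathcal W_m^-)$; composing this linear map with itself gives $\id$, whose degree is $(G)$, and multiplicativity of the degree under composition yields $\deg_{\mathcal W_m^-}\cdot\deg_{\mathcal W_m^-}=(G)$ immediately. Alternatively, and staying purely algebraic, one determines $\operatorname{coeff}^G$ by applying the $G$-action homomorphism (or any ring homomorphism $A(G)\to\bz$ sending $(G)\mapsto 1$) and matching: writing $(H_m)\cdot(H_m)=2(H_m)+c\,(G)$ for some integer $c$ (no other orbit types appear by the coefficient computation above) and applying the homomorphism that evaluates the $\bz$-coefficient of $(G)$, or equivalently comparing with the known identity $\deg_{\mathcal W_m^-}^2=(G)$, forces $c=-1$.

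The only mild obstacle is justifying that no orbit types other than $(H_m)$ and $(G)$ can appear in the self-product — i.e., that the coefficient formula quoted in the excerpt is exhaustive for the product $(H_m)\cdot(K)$. This is exactly what the displayed ``$\operatorname{coeff}^{H_s}$'' formula asserts (its third case is ``$0$ otherwise''), together with the fact that $(G)$ and the family $\{(H_s)\}_{s>0}$ exhaust the orbit types that can occur in products of basic degrees $\deg_{\mathcal W^-_m}$ with $m>0$; this is part of the standard structure of $A(G)$ recalled earlier. Once that bookkeeping is granted, the identity $\deg_{\mathcal W_m^-}\cdot\deg_{\mathcal W_m^-}=(G)$ follows, and the cleanest write-up is simply the composition-degree argument: $\gdeg(-\id,B(\mathcal W_m^-))\cdot\gdeg(-\id,B(\mathcal W_m^-))=\gdeg(\id,B(\mathcal W_m^-))=(G)$.
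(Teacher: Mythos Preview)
Your expansion is correct up to the point where you reduce to computing $(H_m)\cdot(H_m)$, but you then make an algebraic slip: from $(G)-2(H_m)+(H_m)\cdot(H_m)=(G)$ one needs $(H_m)\cdot(H_m)=2(H_m)$, not $2(H_m)-(G)$. Consequently your determination of the $(G)$-coefficient $c$ is off; the correct value is $c=0$, and this is immediate from the definition of Burnside multiplication: an orbit $(G)$ appears in $(H)\cdot(K)$ only if $G/H\times G/K$ has a $G$-fixed point, which forces $H=K=G$. With $c=0$ your coefficient computation $\operatorname{coeff}^{H_s}\bigl((H_m)\cdot(H_m)\bigr)=2[s=m]$ already finishes the job, exactly as in the paper's one-line proof
\[
((G)-(H_m))\cdot((G)-(H_m))=(G)-(H_m)-(H_m)+2(H_m)=(G).
\]

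Your alternative ``composition'' argument is morally correct but mis-phrased: the multiplicativity property in the appendix is for direct products $f_1\times f_2$, not for compositions. The right way to run it is to note that $\deg_{\mathcal W_m^-}\cdot\deg_{\mathcal W_m^-}=\gdeg(-\id,B(\mathcal W_m^-\oplus\mathcal W_m^-))$ by multiplicativity, and that $-\id$ on $\mathcal W_m^-\oplus\mathcal W_m^-$ is $G$-equivariantly homotopic to $\id$ through isomorphisms via rotations in the two copies, whence normalization gives $(G)$. This is a legitimate alternative to the paper's direct Burnside computation, but it requires the homotopy step rather than an appeal to composition.
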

\begin{proof}
The result follows from a direct computation of the relevant Burnside Ring product
    \begin{align*}
    \deg_{\mathcal W_{m}^-} \cdot \deg_{\mathcal W_{m}^-} &= ((G)-(H_{m})) \cdot ((G)-(H_{m})) \\
    &= (G) - (H_{m}) - (H_{m}) + 2(H_{m}).
    \end{align*}
\end{proof}
\vs
Next, let's characterize the coefficients of the orbit types $(H_{s})$ of the form \eqref{def:maximal_orbit_type} in the Burnside Ring product of any pair of
basic degrees $\deg_{\mathcal W_{m}^-},\deg_{\mathcal W_{m'}^-} \in A(G)$ with $m,m' \in \bn$:
\begin{lemma}
    For any $m,m' \in \bn$, one has
    \begin{align*}
    \operatorname{coeff}^{H_{s}}(\deg_{\mathcal W_{m}^-} \cdot \deg_{\mathcal W_{m'}^-}) = \begin{cases}
        2 & \text{ if } s \notin \{m,m'\}, \;  s = \gcd(m,m') \text{ and } \mathcal B(\{m,m'\}); \\
        1 & \text{ if } s \in \{m,m'\}, \;   s = \gcd(m,m') \text{ and } \mathcal B(\{m,m'\});  \\
        -1 & \text{ if } s \in \{m,m'\}  \text{ and } s \neq \gcd(m,m'); \\
        0 & \text{ if } s \notin \{m,m'\}  \text{ and } s \neq \gcd(m,m'),
    \end{cases}    
    \end{align*}
    or equivalently
    \begin{align*}
  \operatorname{coeff}^{H_{s}}(\deg_{\mathcal W_{m}^-} \cdot \deg_{\mathcal W_{m'}^-}) = -[s \in \{m,m'\}]  + 2[s = \gcd(m,m')][\mathcal B(\{m,m'\})].        
    \end{align*}
\end{lemma}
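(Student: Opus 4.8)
The plan is to reduce everything to two applications of the formula for $\operatorname{coeff}^{H_s}((H_m)\cdot(K))$ recorded just before the statement. First I would expand the product of basic degrees using their explicit form $\deg_{\mathcal W_m^-}=(G)-(H_m)$ and $\deg_{\mathcal W_{m'}^-}=(G)-(H_{m'})$, obtaining
\[
\deg_{\mathcal W_m^-}\cdot\deg_{\mathcal W_{m'}^-}=(G)-(H_m)-(H_{m'})+(H_m)\cdot(H_{m'}).
\]
Since $(G)$ is the unit and $s>0$, the term $(G)$ contributes $0$ to $\operatorname{coeff}^{H_s}$; the terms $-(H_m)$ and $-(H_{m'})$ contribute exactly $-[s=m]$ and $-[s=m']$ respectively, i.e. together $-[s\in\{m,m'\}]$ (note $(H_m)=(H_{m'})$ iff $m=m'$, and in the case $m=m'$ Lemma \ref{lemm:involutive_bdeg} already gives the answer $(G)$, so I may assume $m\neq m'$, whence the two Iverson brackets are never simultaneously $1$). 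The remaining work is to compute $\operatorname{coeff}^{H_s}((H_m)\cdot(H_{m'}))$.

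For that I would apply the boxed recurrence with $(K)=(H_{m'})$, which reads
\[
\operatorname{coeff}^{H_s}((H_m)\cdot(H_{m'}))=2\,[s=\gcd(m,m')]\,[\mathcal B(\{m,m'\})].
\]
Adding the two contributions gives precisely
\[
\operatorname{coeff}^{H_s}(\deg_{\mathcal W_m^-}\cdot\deg_{\mathcal W_{m'}^-})=-[s\in\{m,m'\}]+2[s=\gcd(m,m')][\mathcal B(\{m,m'\})],
\]
which is the claimed closed form. To get the case-by-case version I would then just unwind this expression: if $\mathcal B(\{m,m'\})$ holds and $s=\gcd(m,m')$, the value is $2-[s\in\{m,m'\}]$, which is $2$ when $\gcd(m,m')\notin\{m,m'\}$ and $1$ when it does (the latter happening exactly when one of $m,m'$ divides the other); if $s\in\{m,m'\}$ but $s\neq\gcd(m,m')$ the value is $-1$; and otherwise $0$. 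One should also double-check the consistency case $m=m'$: there $\gcd(m,m')=m=m'\in\{m,m'\}$ and $\mathcal B(\{m,m'\})$ is vacuously true, so the formula yields $-1+2=1$, matching Lemma \ref{lemm:involutive_bdeg} (the single orbit type $(H_m)$ appearing in $(G)$ with coefficient $1$).

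The only genuinely delicate point is the bookkeeping of \emph{which} orbit type $(H_s)$ arises from $(H_m)\cdot(H_{m'})$, i.e. the identity $s=\gcd(m,m')$ together with the side condition $\mathcal B(\{m,m'\})$; but this is exactly the content of the recurrence formula stated above (which in turn rests on the characterization $(H_m)\le(K)\iff(K)=(G)$ or $(K)=(H_{m'})$ with $m\le m'$ and $\mathcal B(\{m,m'\})$, and on the description of the amalgamated subgroups $(H_m)$). So I expect no real obstacle here — the proof is a short direct computation, with the care going into matching the Iverson-bracket form to the four-case form and verifying the boundary cases $m=m'$ and $\gcd(m,m')\in\{m,m'\}$.
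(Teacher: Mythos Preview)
Your proof is correct and follows precisely the paper's approach: expand $((G)-(H_m))\cdot((G)-(H_{m'}))$ directly and read off the coefficient of $(H_s)$ using the formula for $\operatorname{coeff}^{H_s}((H_m)\cdot(H_{m'}))$. One small slip in your final consistency check: for $m=m'$ the coefficient of $(H_m)$ in the unit $(G)$ is $0$, not $1$, so the Iverson-bracket formula does \emph{not} match Lemma~\ref{lemm:involutive_bdeg} there --- this is a defect of the stated closed form in the degenerate case $m=m'$ (where $-(H_m)-(H_{m'})$ contributes $-2$, not $-[s\in\{m\}]=-1$), not of your main argument, which correctly handles that case separately via the involutive lemma.
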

\begin{proof}
Again, the result follows from a direct computation of the relevant Burnside Ring product
    \begin{align*}
    \deg_{\mathcal W_{m}^-} \cdot \deg_{\mathcal W_{m'}^-} &= ((G)-(H_{m})) \cdot ((G)-(H_{m'})) \\
    &= (G) - (H_{m}) - (H_{m'}) + 2(H_{\gcd(m,m')})[\mathcal B(\{m,m'\})].
\end{align*}
\end{proof}
\vs
What follows is the full generalization of the previous results to an orbit type $(H_{m})$ of the form \eqref{def:maximal_orbit_type} in the Burnside Ring product of any finite collection of basic degrees $\{\deg_{\mathcal W_{m_k}^-} \}_{k=1}^N$ where $m_1,\ldots,m_{N} \in \mathbb{N}$  are distinct:
\begin{theorem}\label{thm:product_Nbdegs}
For any set of distinct indices $M := \{ m_1,m_2,\ldots m_N\}$, one has
\begin{align*} 
\operatorname{coeff}^{H_{m_0}} \left( \prod\limits_{k=1}^N \deg_{\mathcal W_{m_k}^-} \right)  =  -\left[m_0 \in M\right] + 2 \sum\limits_{\substack{ I \in \mathcal P(M) \\ I \neq \emptyset, \{m_0\} }} (-2)^{\vert I \vert-2} 
 \left[m_0 = \operatorname{gcd}(I)  \right]
 \left[ \mathcal B(I) \right]
    \end{align*}
\end{theorem}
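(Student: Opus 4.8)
The plan is to prove the formula by induction on $N = |M|$, using Lemma \ref{lemm:involutive_bdeg}, the two-factor product computation preceding this theorem, and the distributivity of Burnside ring multiplication. The base cases $N=1$ and $N=2$ are exactly Lemma \ref{lemm:involutive_bdeg} (rewritten as $\deg_{\mathcal W_m^-} = (G) - (H_m)$, so the coefficient of $(H_{m_0})$ is $-[m_0 = m_1]$, matching the sum since the only nonempty proper-or-improper subsets of a singleton give an empty index range) and the preceding two-factor lemma (where the single nonempty subset $I = \{m_1,m_2\}$ contributes $2(-2)^0[m_0 = \gcd(I)][\mathcal B(I)]$, plus the $-[m_0\in M]$ term). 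First I would set up the notation $P_N := \prod_{k=1}^N \deg_{\mathcal W_{m_k}^-}$ and write $P_{N} = P_{N-1}\cdot((G) - (H_{m_N}))= P_{N-1} - P_{N-1}\cdot (H_{m_N})$, so that
\[
\operatorname{coeff}^{H_{m_0}}(P_N) = \operatorname{coeff}^{H_{m_0}}(P_{N-1}) - \operatorname{coeff}^{H_{m_0}}\bigl(P_{N-1}\cdot (H_{m_N})\bigr).
\]
The first term is handled by the inductive hypothesis. For the second term I would expand $P_{N-1}$ into its $(G)$-part plus its $\sum_s c_s (H_s)$ part (with $c_s = \operatorname{coeff}^{H_s}(P_{N-1})$ given by the inductive hypothesis), and apply the recurrence-formula identity stated in the excerpt, namely $\operatorname{coeff}^{H_s}((H_m)\cdot(K)) = [(K)=(G)][s=m] + 2[(K)=(H_{m'})][s=\gcd(m,m')][\mathcal B(\{m,m'\})]$, to get
\[
\operatorname{coeff}^{H_{m_0}}\bigl(P_{N-1}\cdot (H_{m_N})\bigr) = [m_0 = m_N]\,\operatorname{coeff}^{G}(P_{N-1}) + 2\sum_{s}\operatorname{coeff}^{H_s}(P_{N-1})\,[m_0 = \gcd(s,m_N)]\,[\mathcal B(\{s,m_N\})].
\]

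The key auxiliary input needed is a formula for $\operatorname{coeff}^{G}(P_{N-1})$; since $\deg_{\mathcal W_{m}^-} = (G) - (H_m)$ and each $(H_m)\ne (G)$, and since products of non-$(G)$ orbit types never produce a positive $(G)$-coefficient beyond the leading $(G)\cdot(G)\cdots(G)$ term, one has $\operatorname{coeff}^G(P_{N-1}) = 1$; I would record this as a preliminary remark. Then the whole identity becomes an assertion purely about the combinatorial sum
\[
S_N(m_0) := -[m_0\in M] + 2\sum_{\substack{I\in\mathcal P(M)\\ I\ne\emptyset,\{m_0\}}} (-2)^{|I|-2}[m_0 = \gcd(I)][\mathcal B(I)],
\]
and the work reduces to verifying the recursion $S_N(m_0) = S_{N-1}(m_0) - [m_0 = m_N] - 2\sum_s S_{N-1}'(s)[m_0=\gcd(s,m_N)][\mathcal B(\{s,m_N\})]$, where $S_{N-1}'(s)$ is the "$(H_s)$-coefficient part" of $S_{N-1}$, i.e. $S_{N-1}(s)$ itself since $\operatorname{coeff}^{H_s}(P_{N-1}) = S_{N-1}(s)$ by induction. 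Expanding the power set $\mathcal P(M) = \mathcal P(M') \sqcup \{I\cup\{m_N\} : I\in\mathcal P(M')\}$ where $M' = M\setminus\{m_N\}$, the subsets not containing $m_N$ reproduce $S_{N-1}(m_0)$ (with care about the excluded sets $\emptyset$ and $\{m_0\}$), and the subsets containing $m_N$ need to be matched against the convolution-type sum over $s$.

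The main obstacle I anticipate is the bookkeeping in this last matching step, specifically two points: (1) handling the degenerate subsets correctly — the set $I = \{m_N\}$ (which has $\gcd(I) = m_N$ and $\mathcal B(\{m_N\})$ vacuously true, contributing $2(-2)^{-1} = -1$, exactly accounting for the difference between $[m_0\in M]$ and $[m_0\in M']$ via the $-[m_0 = m_N]$ term), and the sets $I$ with $I\setminus\{m_N\} = \{m_0\}$ versus $I\setminus\{m_N\} = \emptyset$; and (2) proving the transitivity-type lemma that for $I'\subseteq M'$ and $I = I'\cup\{m_N\}$ one has $[m_0 = \gcd(I)] = \sum_{s}[s = \gcd(I')][m_0 = \gcd(s,m_N)]$ together with the Boolean compatibility $[\mathcal B(I)] = [\mathcal B(I')][\mathcal B(\{s,m_N\})]$ when $s = \gcd(I')$ — the latter is the delicate claim, since $\mathcal B$ is defined as a condition on \emph{all} pairs in a set, and one must check that adding a single element $m_N$ to $I'$ preserves $\mathcal B$ precisely when $\mathcal B(\{\gcd(I'), m_N\})$ holds (this is where the amalgamation structure $D_m \times \bz_1$ and the divisibility properties of $\gcd$ enter). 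I would isolate this Boolean/gcd compatibility as a standalone sublemma, prove it by reducing "$2 \mid (x\pm y)/\gcd(x,y)$ for all $x,y\in I$" to a statement about the 2-adic valuations $v_2(m_k)$ being all equal (when $\mathcal B(I)$ holds with $\gcd(I)$ having the common valuation), and then the recursion collapses to an algebraic identity in powers of $-2$ that matches term by term.
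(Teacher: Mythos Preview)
Your induction is correct, but it is considerably more elaborate than what the paper actually does. The paper dispenses with induction on $N$ entirely: it simply expands
\[
\prod_{k=1}^N\bigl((G)-(H_{m_k})\bigr)=\sum_{I\in\mathcal P(M)}(-1)^{|I|}\prod_{m\in I}(H_m),
\]
and then invokes in one stroke the identity $\prod_{m\in I}(H_m)=2^{|I|-1}[\mathcal B(I)]\,(H_{\gcd(I)})$ for each nonempty $I$, from which the coefficient of $(H_{m_0})$ is read off immediately. What you call the ``Boolean/gcd compatibility sublemma'' is exactly the content of this iterated product identity, and your observation that $\mathcal B(I)$ is equivalent to all elements of $I$ sharing a common $2$-adic valuation is the right way to see it; the paper, however, treats this as an evident consequence of the two-factor computation and does not spell it out.

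The difference in efficiency comes from where the induction is placed. You induct on the full product $P_N$, which forces you to track $\operatorname{coeff}^{H_s}(P_{N-1})$ for \emph{every} $s$ and then perform a convolution against $(H_{m_N})$; this is why you end up with the bookkeeping over degenerate subsets and the matching of the $-[m_0=m_N]$ term. The paper instead (implicitly) inducts only on the simpler product $\prod_{m\in I}(H_m)$, which stays in a one-dimensional family of orbit types and never requires expanding $P_{N-1}$. Your route works and has the merit of making the $2$-adic characterization of $\mathcal B$ explicit, but the paper's direct power-set expansion is shorter and avoids the secondary recursion on $S_N(m_0)$ altogether.
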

\begin{proof}
The result follows directly from the observation that the Burnside Ring product of any finite set of basic degrees $\{\deg_{\mathcal W_{m_k}^-} \}_{k=1}^N$ has an expansion of the form
\begin{align*} 
    \prod\limits_{k=1}^N \deg_{\mathcal W_{m_k}^-} &= \prod\limits_{k=1}^N  (G) - (H_{m_k}) = \sum\limits_{I \in \mathcal P(M)} \prod\limits_{m \in I} -(H_{m}) = \sum\limits_{I \in \mathcal P(M)} (2)^{\vert I \vert -1} (-1)^{\vert I \vert} \mathcal B(I)(H_{\gcd(I)}),
\end{align*}
where $\mathcal P(M)$ is the 
power set and the expression $\sum_{I \in \mathcal P(M)}$ describes a summation over all subsets of the index set $M$, including the empty set, in which case we put $\prod_{m \in \emptyset} -(H_{m}) := (G)$, and the full set.
\end{proof}
\subsection{An Existence Result for Non-Radial Solutions to \eqref{eq:Lap}}\label{sec:nontriviality_deg}
We are almost ready to present our main existence result. First, we will need some notation to keep track of how often each basic degree $\deg_{\mathcal W_m^-}$ appears in the Burnside Ring product \eqref{eq:computational_formula_gdegA}. Specifically, for each Fourier mode $m \geq 0$, we put 
\begin{align} 
\begin{cases} \label{def:set_Sigma_m}
  \Sigma^m := \{ (n,j) : (m,n,j) \in \Sigma_0, \; 2 \nmid m_j \}; \\   
\mathfrak n^m := | \Sigma^m |,  
\end{cases}
\end{align}
and
\begin{align} \label{def:set_S}
 S:= \{ m \in \bn : 2 \nmid \mathfrak n^m \}.  
\end{align}
\begin{proposition} \label{prop:main_existence}
For any $m_0 \in \bn$, one has 
\begin{align*} 
\operatorname{coeff}^{H_{m_0}} & \left( \gdeg\left(\mathscr A,B(\mathscr H) \right) \right)  =  - \left[m_0 \in  S \right] +2 \sum\limits_{\substack{ I \in \mathcal P(S) \\ I \neq \emptyset, \{m_0\} }} (-2)^{\vert I \vert-2} 
\left[ \mathcal B_H(I) \right]\left[m_0 = \operatorname{gcd}(I)  \right]. 
    \end{align*}
\end{proposition}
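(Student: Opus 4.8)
The plan is to read the formula off the computational expression \eqref{eq:computational_formula_gdegA} for $\gdeg(\mathscr A,B(\mathscr H))$, feeding it into the multiplication formula of Theorem \ref{thm:product_Nbdegs} after a preliminary clean-up based on the involutivity of basic degrees.

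First I would note that, since the squared Bessel zeros $s_{m,n}$ are unbounded, at most finitely many of the eigenvalues $\mu_{m,n,j}$ of \eqref{def:operator_eigenvalues} are negative; hence $\Sigma_0$, and with it each $\Sigma^m$ and the set $S$ of \eqref{def:set_S}, is finite. Next I would collapse the exponents in \eqref{eq:computational_formula_gdegA} using Lemma \ref{lemm:involutive_bdeg}: $(\deg_{\mathcal W_m^-})^{m_j}$ is $(G)$ when $m_j$ is even and $\deg_{\mathcal W_m^-}$ when $m_j$ is odd, so every factor indexed by a triple with $2\mid m_j$ disappears, and — $A(G)$ being commutative — the remaining factors with a common first index $m$ multiply to $(\deg_{\mathcal W_m^-})^{\mathfrak n^m}$, where $\mathfrak n^m=\vert\Sigma^m\vert$ as in \eqref{def:set_Sigma_m}. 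A second use of Lemma \ref{lemm:involutive_bdeg} turns this into $\deg_{\mathcal W_m^-}$ exactly when $m\in S$, leaving
\[
\gdeg(\mathscr A,B(\mathscr H)) = (\deg_{\mathcal W_0^-})^{[\,2\nmid\mathfrak n^0\,]}\cdot\prod_{m\in S}\deg_{\mathcal W_m^-}.
\]

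The second step is to get rid of the zero mode, the only factor not of the form $(G)-(H_m)$. If $2\mid\mathfrak n^0$ the leading factor is already $(G)$. If $2\nmid\mathfrak n^0$, I would substitute $\deg_{\mathcal W_0^-}=(G)-(O(2)\times\bz_1)$ and expand, so that applying $\operatorname{coeff}^{H_{m_0}}$ produces $\operatorname{coeff}^{H_{m_0}}\bigl(\prod_{m\in S}\deg_{\mathcal W_m^-}\bigr)$ minus $\operatorname{coeff}^{H_{m_0}}\bigl((O(2)\times\bz_1)\cdot\prod_{m\in S}\deg_{\mathcal W_m^-}\bigr)$. The second quantity is zero: a structure constant $\operatorname{coeff}^{H}((L)\cdot(K))$ of the Burnside ring vanishes unless $(H)\le(L)$ and $(H)\le(K)$, and $(H_{m_0})=(D_{2m_0} {}^{D_{m_0}}\times^{\bz_1} \bz_2)$ is not subconjugate to $(O(2)\times\bz_1)$ since its image under the projection $G\to\bz_2$ — which is unchanged by conjugation — is all of $\bz_2$, while that of $O(2)\times\bz_1$ is trivial. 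So in both cases $\operatorname{coeff}^{H_{m_0}}(\gdeg(\mathscr A,B(\mathscr H)))=\operatorname{coeff}^{H_{m_0}}\bigl(\prod_{m\in S}\deg_{\mathcal W_m^-}\bigr)$.

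Finally, $S$ is a finite set of distinct positive integers, so Theorem \ref{thm:product_Nbdegs} applied with $M=S$ gives
\[
\operatorname{coeff}^{H_{m_0}}\!\Bigl(\prod_{m\in S}\deg_{\mathcal W_m^-}\Bigr) = -[\,m_0\in S\,]+2\sum_{\substack{I\in\mathcal P(S)\\ I\neq\emptyset,\,\{m_0\}}}(-2)^{\vert I\vert-2}[\,m_0=\gcd(I)\,][\,\mathcal B(I)\,],
\]
which is the claimed identity. I expect the only genuinely delicate point to be the disposal of the zero Fourier mode in the third paragraph — above all the verification that $(H_{m_0})\not\le(O(2)\times\bz_1)$, so that $\deg_{\mathcal W_0^-}$ cannot affect the $(H_{m_0})$-coefficient; everything else is bookkeeping around two applications of Lemma \ref{lemm:involutive_bdeg} and one of Theorem \ref{thm:product_Nbdegs}.
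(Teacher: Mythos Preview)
Your proof is correct and follows essentially the same route as the paper: reduce \eqref{eq:computational_formula_gdegA} via the involutivity Lemma \ref{lemm:involutive_bdeg} to $(\deg_{\mathcal W_0^-})^{[2\nmid\mathfrak n^0]}\cdot\prod_{m\in S}\deg_{\mathcal W_m^-}$, argue that the zero-mode factor is invisible at $(H_{m_0})$, and then invoke Theorem \ref{thm:product_Nbdegs}. Your disposal of the zero mode via the subconjugacy obstruction $(H_{m_0})\not\le(O(2)\times\bz_1)$ is in fact more careful than the paper's one-line claim that $\operatorname{coeff}^{H_s}(\deg_{\mathcal W_0^-})=0$ suffices.
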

\begin{proof}
Using the fact that every basic degree $\deg_{\mathcal W^-_m}$ is an involutive element in the basic degree (cf. Lemma \eqref{lemm:involutive_bdeg}), the computational formula can be improved to
\begin{align*}
\gdeg\left(\mathscr A,B(\mathscr H)\right) &= \prod_{m \geq 0} \prod_{(n,j) \in \Sigma^m} \deg_{\mathcal W_m^-} = \prod_{m \geq 0} (\deg_{\mathcal W_m^-})^{ \mathfrak n^m}  \\
&= (\deg_{\mathcal W_0^-})^{\mathfrak n^0} \cdot \prod_{m \in S} \deg_{\mathcal W_m^-}.
\end{align*}
On the other hand, since one has
\[
\operatorname{coeff}^{H_s}(\deg_{\mathcal W_0^-}) = 0, \quad s \in \bn,
\]
the coefficient standing next to $(H_{m_0})$ for any $m_0 \in \bn$ in the degree $\gdeg\left(\mathscr A,B(\mathscr H)\right)$ coincides with the coefficient standing next to $(H_{m_0})$ in the product of basic degrees $\prod_{m \in S} \deg_{\mathcal W^-_m}$, i.e.
\[
\operatorname{coeff}^{H_{m_0}}  \left( \gdeg\left(\mathscr A,B(\mathscr H) \right) \right) = \operatorname{coeff}^{H_{m_0}}  \left(\prod_{m \in S} \deg_{\mathcal W_m^-} \right).
\]
At this point, the result follows directly from Theorem \ref{thm:product_Nbdegs}.
\end{proof}
\section{Local and Global Bifurcation in 
\eqref{eq:Lap}} \label{sec:bifurcation_results}
We consider the family of compact perturbations of the identity operator
\begin{align} \label{def:operator_F_parameterized}
\mathscrbf F: \bbR \times \mathscr H \to \mathscr H, \quad \mathscrbf F(\alpha,u) := u - \mathscr L^{-1}(N(j(u)) + \bm A(\alpha)j(u)),
\end{align}
noticing that \eqref{eq:Lap_parameterized} is equivalent to the operator equation 
\begin{equation}\label{eq:operator-eq_param}
\mathscrbf F(\alpha,u) = 0,
\end{equation}
in the sense that $(\alpha, u) \in \br \times \mathscr H$ is a solution to \eqref{eq:Lap_parameterized} if and only if it satisfies 
\eqref{eq:operator-eq_param}. 
In what follows, we will denote by
\begin{align}\label{def:operator_A_parameterized}
    \mathscrbf A: \br \times \mathscr H \rightarrow \mathscr H, \quad D\mathscrbf F(0,\alpha)=:\mathscrbf A(\alpha) = \id - \mathscr L^{-1}(\bm A(\alpha) \circ j),
\end{align}
the linearization of \eqref{def:operator_F_parameterized} at the origin in $\mathscr H$. If we enumerate the spectrum of $\bm A(\alpha)$ as follows
\[
\sigma(\bm A(\alpha)) := \{ \bm \mu_1(\alpha), \bm \mu_2(\alpha), \ldots, \bm \mu_r(\alpha) \}
\]
then the spectrum of each $\mathscrbf A(\alpha): \mathscr H \rightarrow \mathscr H$ is given by
\[
\sigma(\mathscrbf A(\alpha)) = \left\{\bm \mu_{m,n,j}
(\alpha):= \frac{s_{m,n}-\bm \mu_j(\alpha)}{s_{m,n}} : \bm \mu_j(\alpha) \in \sigma(\bm A(\alpha)), \; m \in \bn \cup \{0\}, \; n \in \bn \right\}.
\]
Notice also that the set of all solutions to the operator equation \eqref{eq:operator-eq_param} can be divided into the set of trivial solutions 
\[
M:= \{(\alpha,0) \in \br \times \mathscr H \},
\]
and the set of non-trivial solutions
\[
\mathscr S:= \{(\alpha,u) \in \br \times \mathscr H : \mathscrbf F(\alpha,u) = 0, \; u \not\equiv 0\}.
\]
Given any orbit type $(H) \in \Phi_0(G)$, we can always consider the $H$-fixed-point set 
\[
\mathscr S^H := \{(\alpha,u) \in \mathscr S: G_u \leq H \},
\]
consisting of all non-trivial solutions to \eqref{eq:operator-eq_param} with {\it symmetries at least $(H)$}, i.e. $(\alpha,u) \in \mathscr S^H$ if and only if $\mathscrbf F(\alpha,u) = 0$, $u \in \mathscr H \setminus \{0\}$ and 
\[
h u(r,\theta) = u(r,\theta), \; \text{ for all } h \in H \text{ and } (r,\theta) \in D.
\]
\subsection{The Local Bifurcation Invariant and Krasnosel'skii's Theorem}\label{sec:local-bif-inv}
Formulation of a Krasnosel'skii type local bifurcation result for equation \eqref{eq:operator-eq_param} necessitates the introduction of additional notations and terminology
(for more details, the reader is referred to \cite{AED, book-new}). Our first definition clarifies what is meant by a bifurcation of the equation \eqref{eq:operator-eq_param}:
\begin{definition}\rm
 A trivial solution $(\alpha_0,0) \in M$ is said to be a \textit{bifurcation point} for the equation \eqref{eq:operator-eq_param} if every open neighborhood of the point $(\alpha_0,0)$ has non-trivial intersection with $\mathscr S$.   
\end{definition}
It is well-known that a necessary condition for any trivial solution $(\alpha_0,0) \in M$ to be a bifurcation point for the equation \eqref{eq:operator-eq_param} is that the linear operator $\mathscrbf A(\alpha_0):\mathscr H \rightarrow \mathscr H$ is not an isomorphism. This leads to the following definition:
\begin{definition} \label{def:critical_point}\rm
A trivial solution $(\alpha_0,0) \in M$ is said to be a \textit{regular point} for the equation \eqref{eq:operator-eq_param} if $\mathscrbf A(\alpha_0)$ is an isomorphism and a \textit{critical point} otherwise. Moreover, a critical point $(\alpha_0,0) \in M$ is said to be \textit{isolated} if there exists a deleted $\epsilon$-neighborhood $0< \vert \alpha - \alpha_0 \vert < \epsilon$ such that for all $\alpha \in (\alpha_0 - \epsilon, \alpha_0 + \epsilon) \setminus \{\alpha_0\}$, the point $(\alpha,0) \in M$ is regular.
\end{definition}
The set of all critical points for equation \eqref{eq:operator-eq_param}, denoted $\Lambda$, is called the 
{\it critical set}, i.e.
\begin{align}\label{eq:critical}
 \Lambda:=\{(\alpha,0): \text{ $\mathscrbf A(\alpha):\mathscr H \rightarrow \mathscr H$ is not an isomorphism}\}.   
\end{align}
\begin{definition}\rm
 A trivial solution $(\alpha_0,0) \in M$ is said to be a {\it branching point} for the equation \eqref{eq:operator-eq_param} if there exists a non-trivial continuum $K \subset \overline{\mathscr S}$ with $K \cap M = \{ (\alpha_0,0) \}$. In which case, we call any maximal connected set $\mathscr C \subset \overline{\mathscr S}$ containing $(\alpha_0,0)$ a \textit{branch} of nontrivial solutions bifurcating from the branching point $(\alpha_0,0)$.   
\end{definition}
Whereas the classical Krasnosiel'skii bifurcation result is only concerned with the existence of a branch of nontrivial solutions for the equation \eqref{eq:operator-eq_param} bifurcating from a given critical  point $(\alpha_0,0) \in \Lambda$, the equivariant Krasnosiel'skii bifurcation analogue, which we employ in this paper, is also concerned with the symmetric properties of the solutions belonging to such a branch.
\begin{definition} \rm
Given a subgroup $H \leq G$, denote by $\mathscr S^H$ the corresponding $H$-fixed point space of non-trivial solutions.
A branch of solutions $\mathscr C$ is said to have \textit{symmetries at least} $(H)$ if $\mathscr C \cap \overline{\mathscr S^H} \not=\emptyset$.  
\end{definition}
Let $(\alpha_0,0) \in \Lambda$ be an isolated critical point with a deleted $\epsilon$-neighborhood
\[
\{ \alpha \in \mathbb{R} : 0 < | \alpha - \alpha_0 | < \epsilon \},
\]
on which $\mathscrbf A(\alpha): \mathscr H \rightarrow \mathscr H$ is an isomorphism and choose $\alpha^\pm_0 \in (\alpha_0 - \ve, \alpha_0 + \ve)$ with $\alpha^-_0 \leq \alpha_0 \leq \alpha^+_0$. Since the two operators $\mathscrbf A(\alpha^\pm): \mathscr H \rightarrow \mathscr H$ are non-singular, there exists a number $\delta >0$ sufficiently small such that, adopting the notations $\mathscrbf F_{\pm}(u) := \mathscrbf F(\alpha^\pm_0, u)$, $B_{\delta} := \{u\in \mathscr H: \|u\|< \delta\}$, one has
\begin{enumerate}
    \item[$(\rm i)$] $\mathscrbf F_{\pm}^{-1}(0) \cap \partial B_{\delta} = \emptyset$,
    \item[$(\rm ii)$] $\mathscrbf F_{\pm}$ are $B_{\delta}$-admissibly $G$-homotopic to $\mathscrbf A(\alpha^\pm)$, respectively. 
\end{enumerate}
It follows, from the homotopy property of the $G$-equivariant Leray-Schauder degree (cf. Appendix \ref{sec:appendix}), that $(\mathscrbf F_{\pm},B_{\delta})$ are admissible $G$-pairs in $\mathscr H$ and also that $\gdeg(\mathscrbf F_{\pm},B_{\delta}) = \gdeg(\mathscrbf A(\alpha^\pm_0), B(\mathscr H))$, where $B(\mathscr H)$ is the open unit ball in $\mathscr H$. We 
call the Burnside Ring element
\begin{align} \label{def:local_bifurcation_invariant}
 \omega_{G}(\alpha_0):=\gdeg(\mathscrbf A(\alpha^-_0), B(\mathscr H))-\gdeg(\mathscrbf A(\alpha^+_0), B(\mathscr H)), 
\end{align}
the {\it local bifurcation invariant} at $(\lambda_0,0)$. The reader is referred to \cite{book-new, AED} for proof that the invariant \eqref{def:local_bifurcation_invariant} does not depend on the choice of $\alpha^\pm_0 \in \mathbb{R}$ or radius $\delta >0$, and also for the proof of the following local bifurcation result, which is a consequence of the equivariant version of a classical result of K. Kuratowski (cf. \cite{Kura}, Thm. 3, p. 170).
 \vs
 \begin{theorem}\em 
 {\bf(M.A. Krasnosel'skii-Type Local Bifurcation)}\label{th:Kras}
 Let $\mathscrbf F: \mathbb{R} \times \mathscr H \rightarrow \mathscr H$ be a completely continuous $G$-equivariant field 
 with a linearization $D \mathscrbf F(0) : \br \times \mathscr H \rightarrow \mathscr H$ admitting 
 an isolated critical point $(\alpha_0,0)$. If $\omega_{G}(\alpha_0) \neq 0$, then
 \begin{itemize}
     \item[(i)] there exists a branch of nontrivial solutions $\mathscr C$ to system \eqref{eq:operator-eq_param} with branching point $(\alpha_0,0)$;
     \item[(ii)] moreover, if $(H) \in \Phi_0(G)$ is an orbit type with
     \begin{align*}
     \operatorname{coeff}^{H}
         (\omega_{G}(\alpha_0)) \neq 0,
     \end{align*}
 \end{itemize}
then there exists a branch of non-trivial solutions bifurcating from $(\alpha_0,0)$ with symmetries at least $(H)$.
\end{theorem}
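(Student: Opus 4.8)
The plan is to argue by contradiction, following the equivariant adaptation of Kuratowski's separation lemma (\cite{Kura}, and the development in \cite{book-new, AED}): I want to show that the hypothesis $\omega_G(\alpha_0)\neq 0$ is incompatible with the \emph{absence} of a bifurcating branch at $(\alpha_0,0)$, because absence of a branch will permit a $G$-admissible homotopy trivialising a degree that must equal $\omega_G(\alpha_0)$. First I would fix the isolating data. Since $(\alpha_0,0)$ is an isolated critical point, choose $\epsilon>0$ with $\mathscrbf A(\alpha)$ an isomorphism for $0<|\alpha-\alpha_0|\le\epsilon$, and (exploiting complete continuity, which makes the nontrivial-solution set in any bounded cylinder compact) choose $\delta>0$ so small that the caps $\{\alpha_0\pm\epsilon\}\times\overline{B_\delta}$ carry only the trivial solution $u=0$. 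Set $Q:=(\alpha_0-\epsilon,\alpha_0+\epsilon)\times B_\delta$, with $G$ acting trivially on the $\br$-factor so that $\br\times\mathscr H$ is again an isometric Banach $G$-representation.

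Next I would complete $\mathscrbf F$ to a $G$-field whose equivariant degree encodes the jump \eqref{def:local_bifurcation_invariant}. Following Ize's auxiliary-function device, pick a $G$-invariant scalar $\varsigma:\br\times\mathscr H\to\br$ that is positive along the trivial segment near the two caps and negative on the lateral boundary $[\alpha_0-\epsilon,\alpha_0+\epsilon]\times\partial B_\delta$, and set
\[
\mathscrbf G:\br\times\mathscr H\to\br\times\mathscr H,\qquad \mathscrbf G(\alpha,u):=\bigl(\varsigma(\alpha,u),\,\mathscrbf F(\alpha,u)\bigr).
\]
Now domain and target match, so $\gdeg(\mathscrbf G,Q)$ is defined once $\mathscrbf G$ has no zeros on $\partial Q$, and by construction its zeros in $\overline Q$ are exactly the nontrivial solutions of \eqref{eq:operator-eq_param} together with the two cap-centres. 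Using excision to localise to neighbourhoods of the caps, on which $\mathscrbf F$ is $G$-homotopic to its isomorphic linearisations $\mathscrbf A(\alpha_0^\pm)$ while $\varsigma$ contributes opposite signs from the two opposite outward normals, I expect
\[
\gdeg(\mathscrbf G,Q)=\gdeg(\mathscrbf A(\alpha_0^-),B(\mathscr H))-\gdeg(\mathscrbf A(\alpha_0^+),B(\mathscr H))=\omega_G(\alpha_0).
\]

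I would then run the Kuratowski dichotomy. If $(\alpha_0,0)$ were \emph{not} a branching point, then no continuum in $\overline{\mathscr S}$ meets $M$ solely at $(\alpha_0,0)$; Kuratowski's lemma applied to the compact nontrivial-solution set in $\overline Q$ yields a clopen separation of $(\alpha_0,0)$ from $\partial Q$, which I would use to build a $G$-admissible homotopy sweeping every zero of $\mathscrbf G$ out of $Q$, forcing $\gdeg(\mathscrbf G,Q)=0$. Combined with the identification above this gives $\omega_G(\alpha_0)=0$, contradicting the hypothesis; hence $(\alpha_0,0)$ is a branching point and a maximal connected subset of $\overline{\mathscr S}$ through it is the branch $\mathscr C$ of (i). For (ii), I would restrict the whole construction to the invariant fixed-point subspace $\mathscr H^H:=\{u\in\mathscr H: hu=u \text{ for all } h\in H\}$, on which $\mathscrbf F$ remains a completely continuous $W(H)$-field; the recurrence relation tying $\operatorname{coeff}^H$ of a $G$-degree to the degree computed in $\mathscr H^H$ shows that $\operatorname{coeff}^H(\omega_G(\alpha_0))\neq 0$ produces a nontrivial bifurcation invariant for the restricted problem, so rerunning (i) inside $\mathscr H^H$ yields a branch meeting $\overline{\mathscr S^H}$, i.e. with symmetries at least $(H)$.

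The hard part will be the sign-correct identification $\gdeg(\mathscrbf G,Q)=\omega_G(\alpha_0)$ in the second step: pinning down the auxiliary function $\varsigma$ so that excision localises cleanly to the two caps and the two normal directions deliver the minus sign, all while respecting the $G$-action, is precisely the bookkeeping carried out in \cite{book-new, AED}. The restriction argument for (ii) carries a parallel subtlety, namely that degree-coefficient extraction must commute with passage to $\mathscr H^H$; both points are where I would lean on the cited equivariant-degree machinery rather than reprove it from scratch.
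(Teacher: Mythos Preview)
The paper does not actually prove this theorem: immediately before the statement it writes ``The reader is referred to \cite{book-new, AED} for proof \ldots\ and also for the proof of the following local bifurcation result, which is a consequence of the equivariant version of a classical result of K.~Kuratowski (cf.~\cite{Kura}, Thm.~3, p.~170).'' So there is no in-paper proof to compare against; the paper treats Theorem~\ref{th:Kras} as a black box imported from the cited monographs.

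Your sketch is a faithful reconstruction of precisely that imported argument: the Ize complementing device $(\varsigma,\mathscrbf F)$, the excision/homotopy identification $\gdeg(\mathscrbf G,Q)=\omega_G(\alpha_0)$, and the Kuratowski separation dichotomy are exactly the ingredients developed in \cite{book-new, AED}, and your treatment of part~(ii) via restriction to $\mathscr H^H$ and the recurrence formula is likewise the standard route. In that sense you are not taking a different approach from the paper; you are supplying the proof the paper chose to outsource, and your outline matches the one sentence of guidance the paper does give (namely that the result flows from the equivariant Kuratowski lemma). The caveats you flag at the end---the sign bookkeeping for $\varsigma$ and the compatibility of $\operatorname{coeff}^H$ with fixed-point restriction---are real but are handled in the cited references, so deferring to them there is appropriate.
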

Adapting the notions \eqref{def:index_set_Sigma}, \eqref{def:set_S}--\eqref{def:set_Sigma_m} to our parameterized setting with the notations
\begin{align} \label{def:index_set_Sigma_parameterized}
 \Sigma_0(\alpha) := \{ (m,n,j) \in \Sigma : \bm \mu_{m,n,j}(\alpha)<0 \},
\end{align}
and
\begin{align} 
\begin{cases} \label{def:set_Sigma_m_parameterized}  
  \Sigma^m(\alpha) := \{ (n,j) : (m,n,j) \in \Sigma_0(\alpha), \; 2 \nmid m_j \}; \\   
\mathfrak n^m(\alpha) := | \Sigma^m(\alpha) |; \\
 S(\alpha):= \{ m \in \bn : 2 \nmid \mathfrak n^m(\alpha) \},
\end{cases}
\end{align}
and accounting for the change in cardinality of each of the index sets $\Sigma^m(\alpha)$ as $\alpha \in \br$ varies along the interval $[\alpha_0^-,\alpha_0^+]$ with the notations
\begin{align}\label{def:N^m}
\begin{cases}
    \mathfrak t^m(\alpha_0) := \mathfrak n^m(\alpha_0^+) - \mathfrak n^m(\alpha_0^-); \\
    J(\alpha_0) := \{ m \in \bn : 2 \nmid \mathfrak t^m(\alpha_0) \},
\end{cases}
\end{align}
we can exactly determine the coefficient standing next to any of the orbit types $(H_{m_0}) \in \Phi_0(G)$ in the local bifurcation invariant $\omega_G(\alpha_0) \in A(G)$ as follows:
\begin{proposition} \label{prop:main_local_bifurcation}
Let $(\alpha_0,0) \in \Lambda$ be an isolated critical point with a deleted regular neighborhood $\alpha^-_0 < \alpha_0 < \alpha^+_0$ on which $\mathscrbf A(\alpha)$ is an isomorphism. For any $m_0 > 0$, one has
\begin{align*} 
\operatorname{coeff}^{H_{m_0}} & \left( \omega_G(\alpha_0)  \right)  =  - \left[m_0 \in  J(\alpha_0) \right] (-1)^{[2 \nmid \mathfrak n^{m_0}(\alpha_0^+)]} \\
& \quad +2 \sum\limits_{\substack{ I \in \mathcal P(J(\alpha_0)) \\ I \neq \emptyset, \{m_0\} }} (-2)^{\vert I \vert-2} 
\left[ \mathcal B_H(I) \right]\left[m_0 = \operatorname{gcd}(I)  \right] (-1)^{[I \in \mathcal P(S(\alpha_0^+))]},
\end{align*}
\end{proposition}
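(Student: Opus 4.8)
\textbf{Proof strategy for Proposition \ref{prop:main_local_bifurcation}.}

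The plan is to reduce the local bifurcation invariant $\omega_G(\alpha_0)$ to a product of basic degrees over an appropriate index set, and then invoke Theorem \ref{thm:product_Nbdegs}, exactly mirroring the argument used in the proof of Proposition \ref{prop:main_existence}. First I would write, by \eqref{def:local_bifurcation_invariant} together with the computational formula \eqref{eq:computational_formula_gdegA} applied at each endpoint $\alpha_0^\pm$,
\[
\gdeg(\mathscrbf A(\alpha_0^\pm), B(\mathscr H)) = \prod_{m \geq 0}(\deg_{\mathcal W_m^-})^{\mathfrak n^m(\alpha_0^\pm)} = (\deg_{\mathcal W_0^-})^{\mathfrak n^0(\alpha_0^\pm)} \cdot \prod_{m \in S(\alpha_0^\pm)} \deg_{\mathcal W_m^-},
\]
using Lemma \ref{lemm:involutive_bdeg} to collapse the exponents modulo $2$. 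Since $\operatorname{coeff}^{H_s}(\deg_{\mathcal W_0^-}) = 0$ for all $s \in \bn$, the $\mathcal W_0^-$-factor contributes nothing to $\operatorname{coeff}^{H_{m_0}}$ when $m_0 > 0$, so for the purpose of computing $\operatorname{coeff}^{H_{m_0}}$ we may replace each $\gdeg(\mathscrbf A(\alpha_0^\pm), B(\mathscr H))$ by $\prod_{m \in S(\alpha_0^\pm)}\deg_{\mathcal W_m^-}$.

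Next I would subtract the two products. The subtlety here, and what I expect to be the main obstacle, is bookkeeping: $S(\alpha_0^-)$ and $S(\alpha_0^+)$ are in general different index sets, and $\omega_G(\alpha_0)$ is a \emph{difference} of two Burnside ring products rather than a single product, so Theorem \ref{thm:product_Nbdegs} does not apply verbatim. The key observation that makes it work is that $J(\alpha_0)$, the symmetric difference-parity set from \eqref{def:N^m}, is precisely the set of modes $m$ whose basic degree appears an odd number of net times in the formal difference; one has $m \in J(\alpha_0)$ iff $m$ lies in exactly one of $S(\alpha_0^-), S(\alpha_0^+)$. Using Lemma \ref{lemm:involutive_bdeg} once more, I would factor a common product $\prod_{m \in S(\alpha_0^-)\cap S(\alpha_0^+)}\deg_{\mathcal W_m^-}$ out of both terms — this common factor, being involutive, can be cancelled after one checks it does not affect the $(H_{m_0})$-coefficient in the relevant way — reducing the computation to the difference $\prod_{m \in S(\alpha_0^-)\setminus S(\alpha_0^+)}\deg_{\mathcal W_m^-} - \prod_{m \in S(\alpha_0^+)\setminus S(\alpha_0^-)}\deg_{\mathcal W_m^-}$, both products now ranging over disjoint subsets of $J(\alpha_0)$.

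At this point I would apply Theorem \ref{thm:product_Nbdegs} to each of the two products separately, each over its own set of distinct indices. The resulting coefficient of $(H_{m_0})$ is a difference of two sums over power sets $\mathcal P(S(\alpha_0^-)\setminus S(\alpha_0^+))$ and $\mathcal P(S(\alpha_0^+)\setminus S(\alpha_0^-))$; I would then recombine these into a single sum over $\mathcal P(J(\alpha_0))$, tracking the sign with which each $I \subseteq J(\alpha_0)$ enters. The sign is governed by whether $I$ is a subset of $S(\alpha_0^+)$ or of $S(\alpha_0^-)$ — equivalently by the Iverson bracket $[I \in \mathcal P(S(\alpha_0^+))]$ in the statement — and the linear term carries the extra sign $(-1)^{[2\nmid \mathfrak n^{m_0}(\alpha_0^+)]}$ recording which endpoint contributes the isolated $-[m_0 \in M]$-type term. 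Finally I would reconcile the Boolean predicate: $\mathcal B_H(I)$ as used in the statement is the condition $\mathcal B(I)$ from \eqref{def:finite_collection_bdeg} onward specialized to the orbit types $(H_m)$, so the $[\mathcal B_H(I)]$ factors are inherited directly from Theorem \ref{thm:product_Nbdegs} without modification. Assembling these pieces yields the claimed formula; the only genuine work is the disjoint-union repackaging of the two power-set sums and the careful sign accounting, which is routine once the common-factor cancellation in the preceding paragraph is in place.
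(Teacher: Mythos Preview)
Your overall strategy matches the paper's: apply the computational formula \eqref{eq:computational_formula_gdegA} at both endpoints $\alpha_0^\pm$, reduce via Lemma~\ref{lemm:involutive_bdeg} to products over $S(\alpha_0^\pm)$, invoke the coefficient formula from Proposition~\ref{prop:main_existence}/Theorem~\ref{thm:product_Nbdegs}, subtract, and simplify using the common index set $S(\alpha_0^-)\cap S(\alpha_0^+)$. The paper does exactly this, except that it applies Proposition~\ref{prop:main_existence} directly to the \emph{full} sets $S(\alpha_0^\pm)$ and only then observes that contributions from indices in the intersection cancel in the difference.

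Where your route diverges is the ``factor out $P_C := \prod_{m\in S(\alpha_0^-)\cap S(\alpha_0^+)}\deg_{\cW_m^-}$ and cancel'' step, and this is a genuine gap. Involutivity in $A(G)$ gives $P_C^2 = (G)$, but it does \emph{not} say that multiplication by $P_C$ preserves $\operatorname{coeff}^{H_{m_0}}$: multiplying by $(H_m)$ with $m$ in the intersection can move mass between different $(H_s)$-coefficients via the $\gcd$ rule, so dropping $P_C$ is not free. The paper sidesteps this entirely because its cancellation is literal, term-by-term, in the expanded power-set sums rather than multiplicative in $A(G)$. Relatedly, your recombination of sums over $\mathcal P(A)$ and $\mathcal P(B)$ (with $A = S(\alpha_0^-)\setminus S(\alpha_0^+)$, $B = S(\alpha_0^+)\setminus S(\alpha_0^-)$) into a single sum over $\mathcal P(J(\alpha_0)) = \mathcal P(A\cup B)$ is not a bijection: mixed subsets $I$ meeting both $A$ and $B$ lie in $\mathcal P(J(\alpha_0))$ but in neither $\mathcal P(A)$ nor $\mathcal P(B)$, so the bookkeeping you label ``routine'' still owes an explanation of how those mixed terms arise. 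Following the paper and applying Proposition~\ref{prop:main_existence} to the full $S(\alpha_0^\pm)$ before subtracting avoids both issues at once.
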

\begin{proof}
Since $(\alpha^\pm_0,0) \in \mathbb{R} \times \mathscr H$ are regular points of \eqref{eq:operator-eq_param}, we can apply the computational formula \eqref{eq:computational_formula_gdegA} to each of the Leray-Schauder degrees $\gdeg(\mathscrbf A(\alpha_0^\pm),B(\mathscr H))$ to obtain a corresponding computational formula for the local bifurcation invariant \eqref{def:local_bifurcation_invariant}:
\begin{align}\label{eq:computational_formula_local_bifurcation_invariant}
     \omega_G(\alpha_0) = \prod\limits_{(n,m,j) \in \Sigma_0(\alpha^-_0)} (\deg_{\mathcal W_{m}^-})^{m_j} - \prod\limits_{(n,m,j) \in \Sigma_0(\alpha^+_0)} (\deg_{\mathcal W_{m}^-})^{m_j}.
\end{align}
Applying Proposition \ref{prop:main_existence} to each product of basic degrees separately, the coefficient standing next to $(H_{m_0})$ for any $m_0 \in \bn$ in \eqref{eq:computational_formula_local_bifurcation_invariant} is given by
\begin{align*} 
\operatorname{coeff}^{H_{m_0}}   \left( \omega_G(\alpha_0) \right) & =  - \left[m_0 \in  S(\alpha_0^-) \right] + 2\sum\limits_{\substack{ I \in \mathcal P(S(\alpha_0^-)) \\ I \neq \emptyset, \{m_0\} }} (-2)^{\vert I \vert-2} 
\left[ \mathcal B_H(I) \right]\left[m_0 = \operatorname{gcd}(I)  \right] \\
& \quad + \left[m_0 \in  S(\alpha_0^+) \right]  -2\sum\limits_{\substack{ I \in \mathcal P(S(\alpha_0^+)) \\ I \neq \emptyset, \{m_0\} }} (-2)^{\vert I \vert-2} 
\left[ \mathcal B_H(I) \right]\left[m_0 = \operatorname{gcd}(I)  \right].
\end{align*}
Notice that any index $m$ belonging to
$S(\alpha_0^-) \cap S(\alpha_0^+)$ contributes trivially to the coefficient of $(H_{m_0})$. On the other hand, one has $m \notin S(\alpha_0^-) \cap S(\alpha_0^+)$ if and only if $\mathfrak n^m(\alpha_0^\pm)$ differ in parity and 
the two cases $2 \nmid \mathfrak n^m(\alpha_0^-)$ or $2 \nmid \mathfrak  n^m(\alpha_0^+)$ are accounted for appropriate sign arguments.
\end{proof}
\subsection{The Rabinowitz Alternative}
In order to employ the Leray-Schauder $G$-equivariant degree to describe the global properties of branches of non-trivial solutions bifurcating from the critical points of the equation \eqref{eq:operator-eq_param}, we need to make an additional assumption: 
\begin{enumerate}[label=($B$)]
\item\label{b} The critical set $\Lambda \subset M$ (given by \eqref{eq:critical}) is discrete.
\end{enumerate}
Notice $(\rm i)$ that the local bifurcation invariant $\omega_{G}(\alpha_0)$ at any critical point $(\alpha_0,0) \in \Lambda$ is well-defined under assumption \ref{b} and $(\rm ii)$ that if $\mathcal U \subset \mathbb{R} \times \mathscr H$ is an open bounded $G$-invariant set, then its intersection with the critical set is finite. These observations are important to the statement of the following global bifurcation result, the proof of which can be found in \cite{book-new, AED}:
\vs
\begin{theorem}\label{th:Rabinowitz-alt}{\bf (The Rabinowitz Alternative)} \rm
Let $\mathcal U \subset \mathbb{R} \times \mathscr H$  be an open bounded $G$-invariant set with $\partial \mathcal U \cap \Lambda = \emptyset$. If $\mathcal C$ is a branch of nontrivial solutions to \eqref{eq:Lap} bifurcating from the critical point $(\alpha_0,0) \in \mathcal U \cap \Lambda$, then one has the following alternative:
\begin{enumerate}[label=$(\alph*)$]
\item \label{alt_a}  either $\mathcal C \cap \partial \mathcal U \neq \emptyset$;
    \item \label{alt_b} or there exists a finite set
    \begin{align*}
        \mathcal C \cap \Lambda = \{ (\alpha_0,0),(\alpha_1,0), \ldots, (\alpha_n,0) \},
    \end{align*}
    satisfying the following relation
    \begin{align*}
\sum\limits_{k=0}^n \omega_{G}(\alpha_k) = 0.
    \end{align*}
\end{enumerate} 
\end{theorem}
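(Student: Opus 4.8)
\emph{Proposed approach.} The statement is the equivariant incarnation of Rabinowitz's global alternative, so I would follow the classical degree-theoretic skeleton, replacing the integer-valued Leray--Schauder degree by the $A(G)$-valued one and using its additivity, excision and homotopy properties (cf. Appendix \ref{sec:appendix}). The plan is to argue by contradiction: assume that alternative \ref{alt_a} fails, i.e. $\mathcal C\cap\partial\mathcal U=\emptyset$, and deduce \ref{alt_b}. First I would record the relevant compactness: since $\mathscrbf F$ is a completely continuous $G$-field and $\overline{\mathcal U}$ is bounded and closed, the set $\mathscr A:=\overline{\mathscr S\cap\mathcal U}$ is compact; because $\overline{\mathscr S}\cap\mathcal U\subseteq\mathscr A$ and $\mathcal C\cap\partial\mathcal U=\emptyset$, the branch $\mathcal C$ is contained in $\mathcal U$, hence in $\mathscr A$, and the maximality of $\mathcal C$ inside $\overline{\mathscr S}$ forces $\mathcal C$ to be a connected component of the compact metric space $\mathscr A$, disjoint from the compact subset $\mathscr A\cap\partial\mathcal U$.

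Next I would build an isolating neighbourhood. Applying the Kuratowski separation lemma (\cite{Kura}, Thm.~3, p.~170, the same tool underlying Theorem \ref{th:Kras}) to $\mathscr A$, I would split it into two disjoint compact pieces, one containing $\mathcal C$ and the other containing $\mathscr A\cap\partial\mathcal U$; being at positive distance, the $\mathcal C$-piece can be enclosed in a bounded open set $\Omega$ with $\overline\Omega\subseteq\mathcal U$ and with $\partial\Omega$ free of nontrivial solutions, and after averaging over $G$ this $\Omega$ may be taken $G$-invariant. Using that $\Lambda$ is discrete (assumption \ref{b}) I would further shrink $\Omega$ so that $\partial\Omega\cap\Lambda=\emptyset$ and $\Omega\cap\Lambda$ is finite; since $\mathcal C$ is connected and the only bifurcation points it can contain lie on $M$, this forces $\Omega\cap\Lambda=\mathcal C\cap\Lambda=\{(\alpha_0,0),\dots,(\alpha_n,0)\}$, which already gives the first assertion of \ref{alt_b}.

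Finally I would extract the degree balance. Choosing regular parameter values $\beta_0<\alpha_0\le\cdots\le\alpha_n<\beta_{n+1}$ that interlace the critical points and lie beyond $\Omega$ at both ends, I cut $\Omega$ into slabs $\Omega_k:=\Omega\cap\{\beta_k<\alpha<\beta_{k+1}\}$, each meeting $\Lambda$ in exactly one point $(\alpha_k,0)$, together with two end slabs meeting $\Lambda$ nowhere. On each slab the zero set of $\mathscrbf F$ still contains the trivial segment $M\cap\Omega_k$, so I would introduce a $G$-invariant complementing (auxiliary) function that excises the trivial branch and turns $\gdeg$ on $\Omega_k$ into the jump $\gdeg(\mathscrbf A(\beta_k),B(\mathscr H))-\gdeg(\mathscrbf A(\beta_{k+1}),B(\mathscr H))$, which by \eqref{def:local_bifurcation_invariant} and the independence of $\omega_G$ of the auxiliary choices equals $\omega_G(\alpha_k)$; the end slabs and $\partial\Omega$ contribute nothing, having no bifurcation and no solutions respectively. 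Additivity of the $G$-degree over the $\Omega_k$ then yields $\sum_{k=0}^n\omega_G(\alpha_k)=0$.

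\emph{Main obstacle.} The delicate step is the last one: constructing the $G$-invariant auxiliary/complementing function so that the $A(G)$-valued degree on each slab is genuinely well defined despite the trivial branch and actually reproduces the local bifurcation invariant, and keeping the orientation bookkeeping consistent so that additivity in the Burnside ring delivers precisely $\sum_k\omega_G(\alpha_k)=0$; a secondary technical nuisance is arranging the isolating neighbourhood of the second step to be simultaneously $G$-invariant, boundary-free of nontrivial solutions, and disjoint from $\Lambda$ along its boundary. (The full argument is carried out in \cite{book-new, AED}.)
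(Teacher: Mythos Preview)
The paper does not prove Theorem~\ref{th:Rabinowitz-alt}; it simply states the result and refers the reader to \cite{book-new, AED} for a proof. Your proposal therefore goes well beyond what the paper itself supplies: you outline the standard degree-theoretic argument (Kuratowski separation to build a $G$-invariant isolating neighbourhood, slab decomposition, and the complementing-map trick to extract the local bifurcation invariants), which is indeed the route taken in the cited monographs, and you correctly flag the genuinely delicate step as the construction of the auxiliary function that makes the degree on each slab well defined and equal to $\omega_G(\alpha_k)$. Since your sketch aligns with the approach in \cite{book-new, AED} and you explicitly defer the full details to those same sources, there is no discrepancy to report.
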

\vs

\begin{remark}
Suppose that Theorem \ref{th:Kras} is used to demonstrate the existence of a branch $\mathcal C$ of nontrivial solutions to \eqref{eq:Lap} bifurcating from a critical point $(\alpha_0,0)$ and that certain conditions are met such that, for any open bounded $G$-invariant neighborhood $\mathcal U \ni (\alpha_0,0)$ with $\partial \mathcal U \cap \Lambda = \emptyset$, the alternative \ref{alt_b} is impossible. Then, according to Theorem \ref{th:Rabinowitz-alt}, the branch $\mathcal C$ must be unbounded.
\end{remark}
Without an appropriate {\it fixed point reduction} of the bifurcation problem \eqref{eq:operator-eq_param}, we will be unable to guarantee that a branch of non-trivial solutions $\mathcal C$ to \eqref{eq:Lap}  bifurcating from a given isolated critical point $(\alpha_0,0) \in \br \times \mathscr H$ whose existence has been established using a Krasnosel'skii type result (e.g. by Theorem \ref{th:Kras}) does not contain {\it radial solutions}. 
With this in mind, consider the subgroup
$\bm K:=\{(-1,-1),(1,1)\}\le O(2)\times \bz_2$,
denote by $\mathscr H^{\bm K}$ the $\bm K$-fixed point space in $\mathscr H$ and by $\mathscrbf F^{\bm K}:\br\times \mathscr H^{\bm K}\to \mathscr H^{\bm K}$ the restriction 
\begin{equation}\label{eq:H-map}
\mathscrbf F^{\bm K}:=\mathscr F|_{\br\times \mathscr H^{\bm K}}.
\end{equation}
Clearly, any solution $(\alpha,u) \in \br \times \mathscr H^{\bm K}$ to the equation 
\begin{equation}\label{eq:H-Lap}
\mathscrbf F^{\bm K}(\alpha,u)=0,
\end{equation}
is also solution to \eqref{eq:operator-eq_param}. 
Notice also that shifting the angle of any $u \in \mathscr H^{\bm K}$ by half a period has the same effect as negating $u$, i.e. if $ u \in \mathscr H^{\bm K}$, then $u$ must satisfy the invariance
\[
-u(r,\theta + \pi) = u(r,\theta).
\]
Therefore, the set of radial solutions to \eqref{eq:H-Lap} is contained in the set of trivial solutions $M$. In other words, we are guaranteed that any branch of of non-trivial solutions to the bifurcation problem \eqref{eq:H-Lap} consists solely of non-radial solutions. In this $\bm K$-fixed point setting, we must adapt each of the notions used to describe the Rabinowitz alternative for the equation \eqref{eq:operator-eq_param} to the bifurcation map \eqref{eq:H-map}. To begin, notice that the $\bm K$-fixed point space $ \mathscr H^{\bm K}$ is an isometric Hilbert representation of the group $\bm G:=N(\bm K)/\bm K \simeq O(2)$ with the $\bm G$-isotypic decomposition 
\begin{align*} 
   \mathscr H^{\bm K} = \overline{\bigoplus\limits_{m=1}^\infty \mathscr H_{2m-1}}, \quad \mathscr H_{2m-1}:=\overline{\bigoplus\limits_{n=1}^\infty \mathscr E_{n,2m-1}}. 
\end{align*}
We denote by $\mathscr S^{\bm K}$ the set of $\bm K$-fixed non-trivial solutions to \eqref{eq:operator-eq_param}, i.e.
\[
\mathscr S^{\bm K} :=\{(\alpha,u)\in \br\times \mathscr H^{\bm K}:\mathscrbf F^{\bm K}(\alpha,0)=0\;\; \text{ and } \;\; u\not=0\},
\]
and by $\mathscrbf A^{\bm K} : \br \times \mathscr H^{\bm K}\to \mathscr H^{\bm K}$ the restriction of the operator $\mathscrbf A: \br \times \mathscr H \rightarrow \mathscr H$ to $\mathscr H^{\bm K}$, i.e. for each $\alpha \in \br$ we put
\begin{equation}\label{H-Linearization}
   \mathscrbf A^{\bm K}(\alpha): =\mathscrbf A(\alpha)|_{\mathscr H^{\bm K}}: \mathscr H^{\bm K}\to \mathscr H^{\bm K}. 
\end{equation}
As before, the {\it critical set} of \eqref{eq:H-Lap}, now denoted $\Lambda^{\bm K}$, is the set of trivial solutions $(\alpha_0,0) \in M$ for which $\mathscrbf A^{\bm K}(\alpha_0)$ is not an isomorphism
\[
\Lambda^{\bm K}:=\{(\alpha,0)\in \br\times \mathscr H: \mathscrbf A^{\bm K}(\alpha):\mathscr H^{\bm K}\to \mathscr H^{\bm K}\;\; \text{ is not an isomorphism}\},
\]
and the spectrum of \eqref{H-Linearization} can be described in terms of the spectra $\sigma(\mathscrbf A_{m}(\alpha))$ as follows
\[
\sigma(\mathscrbf A^ {\bm K}(\alpha))=\bigcup_{m=1}^\infty \sigma(\mathscrbf A_{2m-1}(\alpha)) = \{ \bm \mu_{m,n,j}(\alpha) \in \sigma (\mathscrbf A(\alpha)):
 2 \nmid m \}.
\]
Assume that for a given $\alpha \in \br$, the operator $\mathscrbf A^ {\bm K}(\alpha): \mathscr H^{\bm K} \rightarrow \mathscr H^{\bm K}$ is an isomorphism. If we refine the index set \eqref{def:index_set_Sigma_parameterized} to include only those indices $(n,m,j) \in \Sigma_0(\alpha)$ relevant to the $\bm K$-fixed point setting with the notation
\[
\Sigma^{\bm K}(\alpha) := \{ (n,m,j) \in \Sigma_0(\alpha) : 2 \nmid m \},
\]
then the $\bm G$-equivariant degree $\bm G\text{-deg}(\mathscrbf A^ {\bm K}(\alpha), B(\mathscr H^{\bm K}))$ can be computed as follows
\begin{align*}
    \bm G\text{-deg}(\mathscrbf A^ {\bm K}(\alpha), B(\mathscr H^{\bm K})) = \prod\limits_{(n,m,j) \in \Sigma^{\bm K}(\alpha)} \wt{\deg}_{\cW^-_{m}},
\end{align*}
where, to distinguish between $G$-basic degrees and $\bm G$-basic degrees, we have introduced the notation
\[
\wt{\deg}_{\cW^-_m}:=\bm G\text{-deg}(-\id,B(\cW^-_m)), \quad 2 \nmid m.
\]
At this point, the computational formula \eqref{eq:computational_formula_local_bifurcation_invariant} for the $\bm K$-fixed local bifurcation invariant 
\[
\omega_{\bm G}(\alpha_0) :=\bm G\text{-deg}(\mathscrbf A^{\bm K}(\alpha_0^-), B(\mathscr H^{\bm K}))-\bm G\text{-deg}(\mathscrbf A^{\bm K}(\alpha_0^+), B(\mathscr H^{\bm K})),
\]
at any isolated critical point $(\alpha_0,0)\in \Lambda^{\bm K}$ with deleted regular neighborhood $\alpha_0^- < \alpha_0 < \alpha_0^+$ becomes
\begin{align}\label{eq:loc-bif-Lap-K}
\omega_{\bm G}(\alpha_0) = \prod\limits_{(n,m,j) \in \Sigma^{\bm K}(\alpha^-)} \wt{\deg}_{\cW^-_m} - \prod\limits_{(n,m,j) \in \Sigma^{\bm K}(\alpha^+)}\wt{ \deg}_{\cW^-_m}.
\end{align}   
Likewise, Theorem \ref{th:Rabinowitz-alt} can be reformulated as follows:
\begin{theorem}\label{th:Rabinowitz-alt-K}
Let $\mathcal U \subset \br \times \mathscr H^{\bm K}$  be an open bounded $\bm G$-invariant set  with $\partial \mathcal U \cap \Lambda^{\bm K} = \emptyset$. If $\mathcal C$ is a branch of nontrivial solutions to \eqref{eq:H-Lap} bifurcating from an isolated critical critical point $(\alpha_0,0) \in \mathcal U \cap \Lambda$, then one has the following alternative:
\begin{enumerate}[label=$(\alph*)$]
\item  either $\mathcal C \cap \partial \mathcal U \neq \emptyset$;
    \item\label{alt_b} or there exists a finite set
    \begin{align*}
        \mathcal C \cap \Lambda^{\bm K} = \{ (\alpha_0,0),(\alpha_1,0), \ldots, (\alpha_n,0) \},
    \end{align*}
    satisfying the following relation
    \begin{align*}
        \sum\limits_{k=0}^n \omega_{\bm G}(\alpha_k) = 0.
    \end{align*}
\end{enumerate} 
\end{theorem}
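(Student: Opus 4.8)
The plan is to derive Theorem~\ref{th:Rabinowitz-alt-K} as a direct specialization of the abstract equivariant Rabinowitz alternative --- the version of Theorem~\ref{th:Rabinowitz-alt} valid for an arbitrary compact Lie group, as stated in \cite{book-new, AED} --- applied to the fixed-point reduced field $\mathscrbf F^{\bm K}:\br\times\mathscr H^{\bm K}\to\mathscr H^{\bm K}$ and the group $\bm G = N(\bm K)/\bm K \simeq O(2)$. Concretely, I would set up the dictionary $G \leftrightarrow \bm G$, $\mathscr H \leftrightarrow \mathscr H^{\bm K}$, $\mathscrbf F \leftrightarrow \mathscrbf F^{\bm K}$, $\mathscrbf A(\alpha) \leftrightarrow \mathscrbf A^{\bm K}(\alpha)$, $\Lambda \leftrightarrow \Lambda^{\bm K}$ and $\omega_G \leftrightarrow \omega_{\bm G}$, and then verify that every hypothesis on the right of this dictionary is met on the left.

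The verification proceeds in a few short steps. First, since $\mathscrbf F$ is $G$-equivariant it carries $\br\times\mathscr H^{\bm K}$ into $\mathscr H^{\bm K}$, and the residual action of $N(\bm K)$ on $\mathscr H^{\bm K}$ descends to an isometric $\bm G$-action under which $\mathscrbf F^{\bm K}$ is $\bm G$-equivariant; since the compact part $\mathscr L^{-1}\bigl(N(j(\cdot))+\bm A(\alpha)j(\cdot)\bigr)$ restricts to a compact operator on the closed invariant subspace $\mathscr H^{\bm K}$, the reduced map $\mathscrbf F^{\bm K}$ is again a completely continuous $\bm G$-equivariant field whose linearization along the trivial solutions is $\mathscrbf A^{\bm K}(\alpha)$. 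Second, $\Lambda^{\bm K}\subseteq\Lambda$, so assumption~\ref{b} forces $\Lambda^{\bm K}$ to be discrete; hence the local bifurcation invariant $\omega_{\bm G}(\alpha_0)$ of \eqref{eq:loc-bif-Lap-K} is well-defined at every isolated critical point, and $\partial\mathcal U\cap\Lambda^{\bm K}=\emptyset$ together with discreteness makes $\mathcal U\cap\Lambda^{\bm K}$ finite. With these points established, the conclusion of the abstract alternative applied to $(\mathscrbf F^{\bm K},\mathcal U,\Lambda^{\bm K},\omega_{\bm G})$ is precisely the asserted dichotomy for the branch $\mathcal C$, and no fresh topological input is needed.

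The step I expect to require the most care is the bookkeeping around the fixed-point reduction: confirming that the $\bm G$-equivariant Leray--Schauder degree $\bm G\text{-deg}(\mathscrbf A^{\bm K}(\alpha),B(\mathscr H^{\bm K}))$ entering $\omega_{\bm G}$ is exactly the degree governed by the abstract theory (so that its homotopy, excision and additivity properties are available on $\mathscr H^{\bm K}$), and that admissibility of $\bm G$-pairs in $\mathscr H^{\bm K}$ is stable under the homotopies used in the proof of Theorem~\ref{th:Rabinowitz-alt}. Once that is in place, the proof reduces to a one-line invocation of the abstract alternative through the dictionary above; the genuinely new feature of working in $\mathscr H^{\bm K}$ --- that its non-trivial solutions are automatically non-radial, via the invariance $-u(r,\theta+\pi)=u(r,\theta)$ --- has already been recorded prior to the statement and plays no role in the proof itself.
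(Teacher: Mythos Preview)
Your proposal is correct and mirrors the paper's own treatment: the paper does not supply a separate proof but simply introduces Theorem~\ref{th:Rabinowitz-alt-K} with the phrase ``Likewise, Theorem~\ref{th:Rabinowitz-alt} can be reformulated as follows,'' i.e., exactly the specialization-through-dictionary argument you outline. If anything, your verification of the hypotheses (equivariance, compactness, discreteness of $\Lambda^{\bm K}$) is more explicit than what the paper records.
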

\noindent To further simplify our exposition, we replace assumption \ref{b} with:
\begin{enumerate} [label=($\wt{B}$)] 
\item\label{b_tilde} The critical set $\Lambda \subset M$ is finite.  
\end{enumerate}
Under assumption \ref{b_tilde}, the critical set \eqref{eq:critical} can be enumerated
\begin{align}\label{eq:enumeration_critical_set}
\Lambda = \{(\alpha_0,0), (\alpha_1,0), \ldots, (\alpha_N,0) \},    
\end{align}
for some $N \in \bn$ in such a way that, if $i < j$, then $\alpha_i < \alpha_j$ for all $i,j \in \{0,1,\ldots,N\}$. Let's adopt the notations \eqref{def:set_S}--\eqref{def:set_Sigma_m} 
and account for the change in cardinality of each of the index sets $\Sigma^m(\alpha_0^-)$ and 
$\Sigma^m(\alpha_N^-)$ with the notations
\begin{align}\label{def:N^m}
\begin{cases}
    \mathfrak t^m_\Lambda := \mathfrak n^m(\alpha_N^+) - \mathfrak n^m(\alpha_0^-); \\
    J_\Lambda := \{ m \in \bn : 2 \nmid \mathfrak t^m_\Lambda \}.
\end{cases}
\end{align}
We can now exactly determine the coefficient standing next to any of the orbit types $(H_{m_0}) \in \Phi_0(G)$ in the sum of local bifurcation invariants $\{\omega_G(\alpha_k)\}_{k = 1}^N \subset A(G)$ as follows: 
\begin{proposition} \label{prop:main_global_bifurcation}
Adopting the enumeration of the critical set \eqref{eq:enumeration_critical_set} and for any $m_0 > 0$, one has
\begin{align*} 
\operatorname{coeff}^{H_{m_0}} & \left( \sum_{k=0}^N \omega_G(\alpha_k)  \right)  =  - \left[m_0 \in  J_\Lambda \right] (-1)^{[2 \nmid \mathfrak n^{m_0}(\alpha_N^+)]} \\
& \quad +2 \sum\limits_{\substack{ I \in \mathcal P(J_\Lambda) \\ I \neq \emptyset, \{m_0\} }} (-2)^{\vert I \vert-2} 
\left[ \mathcal B_H(I) \right]\left[m_0 = \operatorname{gcd}(I)  \right] (-1)^{[I \in \mathcal P(S(\alpha_N^+))]},
\end{align*}
\end{proposition}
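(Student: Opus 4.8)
The plan is to observe that the sum $\sum_{k=0}^N \omega_G(\alpha_k)$ telescopes, so that it reduces to a difference of two endpoint degrees, and then to read off the coefficient of $(H_{m_0})$ by applying Proposition~\ref{prop:main_existence} to each of these two degrees. First I would use assumption~\ref{b_tilde}: the critical set is finite and the enumeration \eqref{eq:enumeration_critical_set} is strictly increasing, so for every $k \in \{0,\dots,N-1\}$ the points $\alpha_k^+$ and $\alpha_{k+1}^-$ may be chosen inside a common open interval on which $\mathscrbf A(\alpha)$ is an isomorphism. The straight-line homotopy $t \mapsto \mathscrbf A\big((1-t)\alpha_k^+ + t\,\alpha_{k+1}^-\big)$ is then $B(\mathscr H)$-admissible, so the homotopy property of the $G$-equivariant Leray--Schauder degree gives $\gdeg(\mathscrbf A(\alpha_k^+),B(\mathscr H)) = \gdeg(\mathscrbf A(\alpha_{k+1}^-),B(\mathscr H))$ for each such $k$. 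Substituting the definition \eqref{def:local_bifurcation_invariant} of each $\omega_G(\alpha_k)$ and cancelling the interior terms yields
\[
\sum_{k=0}^N \omega_G(\alpha_k) = \gdeg(\mathscrbf A(\alpha_0^-),B(\mathscr H)) - \gdeg(\mathscrbf A(\alpha_N^+),B(\mathscr H)),
\]
which is precisely the form of the local bifurcation invariant analyzed in Proposition~\ref{prop:main_local_bifurcation}, but with $\alpha_0^+$ replaced by $\alpha_N^+$.

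From here I would proceed exactly as in the proof of Proposition~\ref{prop:main_local_bifurcation}. Since $(\alpha_0^-,0)$ and $(\alpha_N^+,0)$ are regular points, the computational formula \eqref{eq:computational_formula_gdegA} together with Proposition~\ref{prop:main_existence} expresses $\operatorname{coeff}^{H_{m_0}}$ of each of the two degrees above in terms of the generating sets $S(\alpha_0^-)$ and $S(\alpha_N^+)$ from \eqref{def:set_Sigma_m_parameterized}. Subtracting, any index $m \in S(\alpha_0^-) \cap S(\alpha_N^+)$ contributes equal summands with opposite sign and hence cancels, so only indices for which $\mathfrak n^m(\alpha_0^-)$ and $\mathfrak n^m(\alpha_N^+)$ have different parity survive; by the definition of $J_\Lambda$ in \eqref{def:N^m} these are exactly the $m \in J_\Lambda$. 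Splitting the two residual possibilities $2\nmid\mathfrak n^m(\alpha_0^-)$ and $2\nmid\mathfrak n^m(\alpha_N^+)$ and tracking the induced signs produces the factor $(-1)^{[2\nmid\mathfrak n^{m_0}(\alpha_N^+)]}$ in the linear term and the factor $(-1)^{[I\in\mathcal P(S(\alpha_N^+))]}$ in the sum over nonempty proper subsets $I$, which is the asserted formula.

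The genuinely routine part is the telescoping in the first step; the only place that requires care is the final consolidation, namely verifying that after cancellation the two generators $S(\alpha_0^-)$ and $S(\alpha_N^+)$ can be merged into the single index set $J_\Lambda$ with the signs emerging as stated. This step, however, is verbatim the sign bookkeeping already carried out for the local invariant in Proposition~\ref{prop:main_local_bifurcation}, so no new difficulty arises: the content of the proposition is essentially that, under \ref{b_tilde}, the Rabinowitz obstruction $\sum_{k=0}^N \omega_G(\alpha_k)$ collapses to the difference of the two boundary degrees and is therefore governed by Proposition~\ref{prop:main_existence} evaluated at $\alpha_0^-$ and $\alpha_N^+$.
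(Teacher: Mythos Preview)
Your proposal is correct and follows essentially the same approach as the paper: telescope the sum $\sum_{k=0}^N \omega_G(\alpha_k)$ down to the difference of the two endpoint degrees at $\alpha_0^-$ and $\alpha_N^+$, then invoke Proposition~\ref{prop:main_existence} on each and finish with the sign bookkeeping of Proposition~\ref{prop:main_local_bifurcation}. The only cosmetic difference is in how the telescoping is justified: the paper argues that continuous dependence of $\sigma(\mathscrbf A(\alpha))$ on $\alpha$ forces $S(\alpha_k^+)=S(\alpha_{k+1}^-)$ for adjacent critical points, whereas you appeal directly to the homotopy property of the $G$-equivariant degree on the regular interval between them; both routes yield the same cancellation.
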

\begin{proof}
Due to the continuous dependence of the spectrum $\sigma(\mathscrbf A(\alpha))$ on the bifurcation parameter, one has for any two adjacent critical points $(\alpha_k,0), (\alpha_{k+1},0) \in \Lambda$ (here $0 \leq k < N $) with corresponding regular neighborhoods $[\alpha_k^-,\alpha_k^+] \setminus \{\alpha_k\}$ and $[\alpha_{k+1}^-,\alpha_{k+1}^+] \setminus \{\alpha_{k+1}\}$, the coincidence $S(\alpha_k^+) = S(\alpha_{k+1}^-)$. Consequently, the computational formula suggested by Proposition \ref{prop:main_existence} telescopes into
\begin{align*}
\operatorname{coeff}^{H_{m_0}}  \left( \sum_{k=0}^N \omega_G(\alpha_N)  \right) & =  - \left[m_0 \in  S(\alpha_0^-) \right] + 2\sum\limits_{\substack{ I \in \mathcal P(S(\alpha_0^-)) \\ I \neq \emptyset, \{m_0\} }} (-2)^{\vert I \vert-2} 
\left[ \mathcal B_H(I) \right]\left[m_0 = \operatorname{gcd}(I)  \right] \\
& \quad + \left[m_0 \in  S(\alpha_N^+) \right]  -2\sum\limits_{\substack{ I \in \mathcal P(S(\alpha_N^+)) \\ I \neq \emptyset, \{m_0\} }} (-2)^{\vert I \vert-2} 
\left[ \mathcal B_H(I) \right]\left[m_0 = \operatorname{gcd}(I)  \right]. 
\end{align*}
From here, the result follows using the same arguments employed in the proof of Proposition \ref{prop:main_local_bifurcation}.
\end{proof}

\appendix
\section{The $\mathcal G$-Equivariant Degree}\label{sec:appendix}
\noi{\bf  Equivariant notation.}
Let $\mathcal G$ be a compact Lie group. For any subgroup  $H \leq \mathcal G$, we denote by $(H)$ its conjugacy class,
by $N(H)$ its normalizer by $W(H):=N(H)/H$ its Weyl group in $\mathcal G$. The set of all subgroup conjugacy classes in $\mathcal G$ 
\[
\Phi(\mathcal G):=\{(H): H\le \mathcal G\},
\]
and has a natural partial order defined as follows
\[
(H)\leq (K) \iff \exists_{ g\in \mathcal G}\;\;gHg^{-1}\leq K.
\]
As is possible with any partially ordered set, we extend the natural order over $\Phi(\mathcal G)$ to a total order, which we indicate by `$\preccurlyeq$' to differentiate the two relations. Moreover, we put
\[
\Phi_0 (\mathcal G):= \{ (H) \in \Phi(\mathcal G) \; : \; \text{$W(H)$  is finite}\},
\]
and, for any $(H),(K) \in \Phi_0(\mathcal G)$, we denote by $n(H,K)$ the number of subgroups $\tilde K \leq \mathcal G$ with $\tilde K \in (K)$ and $H \leq \tilde K$. Given a $\mathcal G$-space $X$ with an element $x \in X$, we denote by
$\mathcal G_{x} :=\{g\in \mathcal G:gx=x\}$ the {\it isotropy group} of $x$
and we call $(\mathcal G_{x}) \in \Phi(\mathcal G)$  the {\it orbit type} of $x \in X$. We also put 
\begin{align*}
    \begin{cases}
       \Phi(\mathcal G,X) := \{(H) \in \Phi(\mathcal G)  : 
(H) = (\mathcal G_x) \; \text{for some $x \in X$}\}, \\
\Phi_0(\mathcal G,X):= \Phi(\mathcal G,X) \cap \Phi_0(\mathcal G).
    \end{cases}
\end{align*}
Given any subgroup $H\leq \mathcal G$, we call the subspace 
\[
X^{H} :=\{x\in X:\mathcal G_{x}\geq H\},
\]
the {\it $H$-fixed-point subspace} of $X$. If $Y$ is another $\mathcal G$-space, then a continuous map $f : X \to Y$ is said to be {\it $\mathcal G$-equivariant} if $f(gx) = gf(x)$ for each $x \in X$ and $g \in \mathcal G$.
\vs
\noi{\bf The Burnside Ring and Axioms of the $\mathcal G$-Equivariant Brouwer Degree.}
We call the free $\mathbb{Z}$-module $A(\mathcal G) := \mathbb{Z}[\Phi_0(\mathcal G)]$ the {\it Burnside Ring} when it is equipped with the multiplicative operation
\begin{align} \label{def:burnside_product}
    (H) \cdot (K) := \sum\limits_{(L) \in \Phi_0(\mathcal G)} n_L(L), \quad (H),(K) \in \Phi_0(\mathcal G), 
\end{align}
where the coefficients $n_L \in \mathbb{Z}$ are given by the recurrence formula
\begin{align} \label{def:recurrence_formula_coefficients_burnside_product}
    n_L := \frac{n(L,H) |W(H)| n(L,K) |W(K)| - \sum_{(\tilde L) \preccurlyeq (L)} n_{\tilde L} n(L,\tilde L) |W(\tilde L)|}{|W(L)|}.
\end{align}
Since every Burnside Ring element $a \in A(\mathcal G)$ can be expressed as a formal sum over some finite number of generator elements  
\[
a = n_{H_1}(H_1) + n_{H_2}(H_2) + \cdots + n_{H_N}(H_N),
\]
we can use the notation
\[
\operatorname{coeff}^H: A(\mathcal G) \rightarrow \bz, \quad
\operatorname{coeff}^H(a) := n_H,
\]
to specify the integer coefficient standing next to the generator element $(H) \in \Phi_0(\mathcal G)$.
\vs
Let $V$ be an orthogonal $\mathcal G$-representation and suppose that $\Om \subset V$ is an open bounded $\mathcal G$-invariant set. A $\mathcal G$-equivariant map $f:V \rightarrow V$ is said to be $\Om$-admissible if $f(x) \neq 0$ for all $x \in \partial \Om$, in which case the pair $(f,\Om)$ is called an {\it admissible $\mathcal G$-pair} in $V$. We denote by $\mathcal M^{\mathcal G}(V)$ the set of all admissible $\mathcal G$-pairs in $V$ and by $\mathcal{M}^{\mathcal G}$ the set of all admissible $\mathcal G$-pairs defined by taking a union over all orthogonal $\mathcal G$-representations, i.e.
\[
\mathcal M^{\mathcal G} := \bigcup\limits_V \mathcal M^{\mathcal G}(V).
\]
The $\mathcal G$-equivariant Brouwer degree provides an algebraic count of solutions, according to their symmetric properties, to equations of the form
\[
f(x) = 0, \; x \in \Omega,
\]
where $(f, \Omega) \in \mathcal M^{\mathcal G}$. In fact, it is standard (cf. \cite{AED}, \cite{book-new}) to define the {\it $\mathcal G$-equivariant Brouwer degree} as the unique map associating to every admissible $\mathcal G$-pair $(f,\Om)\in \mathcal M^{\mathcal G}$ an element from the Burnside Ring $A(\mathcal G)$, satisfying the four {\it degree axioms} of existence, additivity, homotopy and normalization:
\vs
\begin{theorem} \rm
\label{thm:GpropDeg} There exists a unique map $\caldeg:\mathcal{M}
	^{\mathcal G}\to A(\mathcal G)$, that assigns to every admissible $\mathcal G$-pair $(f,\Omega)$ the Burnside Ring element
	\begin{equation}
		\label{eq:G-deg0}\caldeg(f,\Omega)=\sum_{(H) \in \Phi_0(\mathcal G)}%
		{n_{H}(H)},
	\end{equation}
	satisfying the following properties:
	\begin{itemize}
		\item[] \textbf{(Existence)} If  $n_{H} \neq0$ for some $(H) \in \Phi_0(\mathcal G)$ in \eqref{eq:G-deg0}, then there
		exists $x\in\Omega$ such that $f(x)=0$ and $(\mathcal G_{x})\geq(H)$.
		\item[] \textbf{(Additivity)} 
  For any two  disjoint open $\mathcal G$-invariant subsets
  $\Omega_{1}$ and $\Omega_{2}$ with
		$f^{-1}(0)\cap\Omega\subset\Omega_{1}\cup\Omega_{2}$, one has
		\begin{align*}
\caldeg(f,\Omega)=\caldeg(f,\Omega_{1})+\caldeg(f,\Omega_{2}).
		\end{align*}
		\item[] \textbf{(Homotopy)} For any 
  $\Omega$-admissible $\mathcal G$-homotopy, $h:[0,1]\times V\to V$, one has
		\begin{align*}
\caldeg(h_{t},\Omega)=\mathrm{constant}.	\end{align*}
		\item[] \textbf{(Normalization)}
  For any open bounded neighborhood of the origin in an orthogonal $\mathcal G$-representation $V$ with the identity operator $\id:V \rightarrow V$, one has
		\begin{align*}
	\caldeg(\id,\Omega)=(\mathcal G).
	\end{align*}
	\end{itemize}
 \vs
The following are additional properties of the map $\caldeg$ which can be derived from the four axiomatic properties defined above (cf. \cite{AED}, \cite{book-new}):		
\begin{itemize}
		\item[] {\textbf{(Multiplicativity)}} For any $(f_{1},\Omega
		_{1}),(f_{2},\Omega_{2})\in\mathcal{M} ^{\mathcal G}$,
		\begin{align*}
			\caldeg(f_{1}\times f_{2},\Omega_{1}\times\Omega_{2})=
		\caldeg(f_{1},\Omega_{1})\cdot \caldeg(f_{2},\Omega_{2}),
		\end{align*}
		where the multiplication `$\cdot$' is taken in the Burnside ring $A(\mathcal G )$.

		\item[] \textbf{(Recurrence Formula)} For an admissible $\mathcal G$-pair
		$(f,\Omega)$, the $\mathcal G$-degree \eqref{eq:G-deg0} can be computed using the
		following Recurrence Formula:
		\begin{equation}
			\label{eq:RF-0}n_{H}=\frac{\deg(f^{H},\Omega^{H})- \sum_{(K)\preccurlyeq(H)}{n_{K}\,
					n(H,K)\, \left|  W(K)\right|  }}{\left|  W(H)\right|  },
		\end{equation}
		where $\left|  X\right|  $ stands for the number of elements in the set $X$
		and $\deg(f^{H},\Omega^{H})$ is the Brouwer degree of the map $f^{H}%
		:=f|_{V^{H}}$ on the set $\Omega^{H}\subset V^{H}$.
	\end{itemize}
\end{theorem}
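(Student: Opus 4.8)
The plan is to establish existence and uniqueness separately, taking as the fundamental building block the classical (non-equivariant) Brouwer degree of the restrictions $f^{H}:=f|_{V^{H}}$ on the fixed-point sets $\Omega^{H}\subset V^{H}$. The organizing principle is that, because $f$ is $\mathcal G$-equivariant, each $f^{H}$ is a well-defined $\Omega^{H}$-admissible map on the linear subspace $V^{H}$, and the residual $W(H)$-action on the stratum of $\Omega^{H}$ consisting of points whose isotropy is exactly conjugate to $H$ is \emph{free}. This freeness is what simultaneously forces the coefficients $n_{H}$ and guarantees that they are integers.

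For uniqueness, I would first show that the four axioms force the Recurrence Formula \eqref{eq:RF-0}, from which $\caldeg$ is determined. Fix $(H)\in\Phi_{0}(\mathcal G)$ and restrict to $V^{H}$. Using (Additivity) and (Homotopy) one reduces to a map whose zero set in $\Omega^{H}$ is a disjoint union of $\mathcal G$-orbits; a single orbit of type $(K)$ with $H$ subconjugate to $K$ meets $V^{H}$ in a set of points on which $W(H)$ acts, contributing weight $n(H,K)\,|W(K)|$ to the classical count. Summing over orbit types yields a linear system expressing each $\deg(f^{H},\Omega^{H})$ as $\sum_{(K)\geq(H)} n(H,K)\,|W(K)|\,n_{K}$. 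This system is triangular with respect to the order on $\Phi_{0}(\mathcal G)$ with nonzero diagonal entries $|W(H)|$, so it inverts uniquely to \eqref{eq:RF-0}; since the right-hand side of \eqref{eq:RF-0} is assembled only from classical Brouwer degrees and previously determined coefficients, the value $\caldeg(f,\Omega)$ in \eqref{eq:G-deg0} is unique.

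For existence, I would define the coefficients $n_{H}$ directly by \eqref{eq:RF-0}, proceeding inductively along the order on $\Phi_{0}(\mathcal G)$, and then verify the axioms. Two points require care. First, finiteness: since $\overline{\Omega}$ is compact it carries only finitely many orbit types, so only finitely many $n_{H}$ are nonzero and \eqref{eq:G-deg0} is a legitimate element of $A(\mathcal G)$. Second, and this is the crux, integrality: one must show the numerator in \eqref{eq:RF-0} is divisible by $|W(H)|$. For this I would pass, via equivariant transversality, to a $\mathcal G$-homotopic map $g$ whose zeros of isotropy exactly $(H)$ are regular; these zeros lie in the exact-$(H)$ stratum of $V^{H}$, on which $W(H)$ acts freely, so they fall into free $W(H)$-orbits carrying consistent local Brouwer indices, and their signed count---which is precisely the numerator of \eqref{eq:RF-0}---is a multiple of $|W(H)|$. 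Hence $n_{H}\in\mathbb Z$ is the signed number of such orbits. With the coefficients in hand, (Normalization) is the computation that $f=\id$ gives $\deg(\id,\Omega^{H})=1$ for every $(H)$ and the recurrence collapses to $(\mathcal G)$; (Additivity) and (Homotopy) descend from the corresponding properties of the classical Brouwer degree applied to each $f^{H}$ (resp. $h_{t}^{H}$) together with the inductive structure of \eqref{eq:RF-0}; and (Existence) follows because $n_{H}\neq 0$ forces a regular zero of $g^{H}$ in $\Omega^{H}$, i.e. a point with isotropy at least $(H)$.

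The main obstacle is exactly this integrality step, together with the orientation bookkeeping it conceals: one must ensure the local indices are constant along each free $W(H)$-orbit, which hinges on the residual $W(H)$-action being compatible with the chosen orientation of $V^{H}$. Once this is settled, the derived (Multiplicativity) property follows by the same triangular-inversion mechanism applied to $(f_{1}\times f_{2})^{H}=f_{1}^{H}\times f_{2}^{H}$ on $(V_{1}\times V_{2})^{H}=V_{1}^{H}\times V_{2}^{H}$: the product formula for the classical Brouwer degree shows that the coefficients of $\caldeg(f_{1}\times f_{2},\Omega_{1}\times\Omega_{2})$ obey the same recurrence \eqref{def:recurrence_formula_coefficients_burnside_product} as the Burnside product \eqref{def:burnside_product}, forcing equality, while the (Recurrence Formula) property is nothing other than the uniqueness relation \eqref{eq:RF-0} established above.
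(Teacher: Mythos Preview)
The paper does not actually prove this theorem: it is stated in the appendix as a standard result, with the existence/uniqueness of $\caldeg$ and the derived properties explicitly attributed to \cite{AED} and \cite{book-new}. There is therefore no ``paper's own proof'' against which to compare your attempt.

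That said, your sketch is consistent with the standard treatment in those references. The strategy of defining the coefficients $n_{H}$ via the recurrence \eqref{eq:RF-0}, proving integrality through the free $W(H)$-action on the exact-$(H)$ stratum, and deducing the axioms from the corresponding properties of the classical Brouwer degree on each $V^{H}$ is precisely the approach of \cite{AED}. One cautionary remark: your orientation bookkeeping glosses over the case where $W(H)$ does not act by orientation-preserving maps on $V^{H}$; in that situation the local indices along a free $W(H)$-orbit need not all carry the same sign, and one argues instead that they occur in cancelling pairs when the action reverses orientation, so the signed count remains a multiple of $|W(H)|$. This is handled in the cited monographs but is worth flagging as the genuine subtlety in the integrality step.
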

The natural generalization of the $\mathcal G$-equivariant Brouwer degree to its infinite dimensional counterpart the $\mathcal G$-equivariant Leray-Schauder degree is described in detail elsewhere (see for example \cite{survey, book-new, AED}).
\vs
\noi{\bf Computational Formulae for the $\mathcal G$-Equivariant Brouwer Degree.} 
We denote by $\{ \mathcal V_i \}_{i \in \mathbb{N}}$ the set of all irreducible $\mathcal G$-representations and define the $i$-th basic degree as follows
\begin{align*}
\deg_{\mathcal{V}_{i}}:=\caldeg(-\id,B(\mathcal{V} _{i})).
\end{align*} 
Given any orthogonal $\mathcal G$-representation with a $\mathcal G$-isotypic decomposition
\[
V = \bigoplus_{i \in \mathbb{N}} V_i,
\]
and any $\mathcal G$-equivariant  linear isomorphism $T:V\to V$, the Multiplicativity and Homotopy properties of the $\mathcal G$-equivariant Brouwer degree, together with Schur's Lemma implies
\begin{align*}
  \caldeg(T,B(V))=\prod_{i \in \mathbb{N}} \caldeg
	(T_{i},B(V_{i}))= \prod_{i \in \mathbb{N}}\prod_{\mu\in\sigma_{-}(T)} \left(
	\deg_{\mathcal{V} _{i}}\right)  ^{m_{i}(\mu)}%
\end{align*}
where $T_{i}=T|_{V_{i}}$, $\sigma_{-}(T)$ denotes the real negative
spectrum of $T$ and $m_i(\mu) := \dim E_i(\mu)/ \dim \mathcal V_i$ (here, we indicate by
$E(\mu)$ the generalized eigenspace associated with any $\mu \in \sigma(T)$ and $E_i(\mu) := E(\mu) \cap V_i$).
\vskip.3cm
Notice that each of the basic degrees: 
\begin{align*}
	\deg_{\mathcal{V} _{i}}=\sum_{(H)}n_{H}(H),
\end{align*}
can be practically computed, using the recurrence formula  \eqref{eq:RF-0}, as follows
\begin{align*}
n_{H}=\frac{(-1)^{\dim\mathcal{V} _{i}^{H}}- \sum_{(H)\preccurlyeq(K)}{n_{K}\, n(H,K)\, \left|  W(K)\right|  }}{\left|  W(H)\right|  }.
\end{align*}

\end{document}